\newtheorem{thm}{Theorem}
\newtheorem{defn}{Definition}
\newtheorem{lem}{Lemma}
\newtheorem*{rem}{Remark}
\newtheorem{cor}{Corollary}
\newtheorem{assum}{Assumption}
\newcommand{\an}[1]{{\textcolor{black}{#1}}}
\newcommand{\sa}[1]{{\textcolor{black}{#1}}}
\DeclareMathOperator*{\argmin}{arg\,min}
\definecolor{mygreen}{RGB}{153,255,153}
\definecolor{myorange}{RGB}{255,178,102}
\definecolor{myred}{RGB}{255,153,153}
\definecolor{myblue}{RGB}{153,204,255}
\title{On Non-Monotone Variational Inequalities}
\author{
    Sina Arefizadeh$^*$\\
    Dept. of Electrical and Computer Engineering\\
    Arizona State University\\
    Tempe, Arizona, USA\\
    \texttt{sarefiza@asu.edu}
\And
    Angelia~Nedi\'c\\
    Dept. of Electrical and Computer Engineering\\
    Arizona State University\\
    Tempe, Arizona, USA\\
    \texttt{Angelia.Nedich@asu.edu}
}
\theoremstyle{definition}
\def\argmin{\mathop {\rm argmin}}
\def\nat{{\rm nat}}
\begin{document}
\thanks{Corresponding author}
\thanks{ This work has been supported by the NSF award CCF 2106336.}
\maketitle
\begin{abstract}
In this paper, we \an{provide} some sufficient conditions for the existence of solutions to non-monotone Variational Inequalities (VIs) based on inverse mapping theory and degree theory. We have obtained several applicable sufficient conditions for this problem and have introduced a sufficient condition for the existence of a Minty solution. We have shown that the Korpelevich and Popov methods converge to a solution of a non-monotone VI, provided that a Minty solution exists. 
\end{abstract}

%%%%%%%%%%%%%%%%%%%%%%%%%%
\section{Introduction}\label{Sec-introduction}
%%%%%%%%%%%%%%%%%%%%%%%%%%
The Variational Inequality (VI) framework has been developed as a branch of mathematics to address practical challenges in finance, economics, social, pure, and applied sciences~\cite{noor2020new}.
Many solution concepts in game theory are tightly connected with solution concepts in variational inequalities~\cite{C20}. 
Recently, VIs have \an{attracted a lot of attention}, with main focus on designing algorithms for solving monotone VIs. 
Among these studies, the gradient methods started with work by Sibony~\cite{C9}, the proximal method by Martinent~\cite{C10}, \an{and extended} to the extra gradient method by Korpelevich~\cite{C4} and, later on, by the developments of forward-backward~\cite{C11}, mirror-prox~\cite{C12}, dual exploration~\cite{C13}, and hybrid proximal Korpelevich~\cite{C14} methods.
More recently, higher-order methods have been proposed and studied to determine their global \an{convergence properties and iteration complexities,} such as~\cite{C15}, \cite{C16}, and~\cite{C17}. 
For non-monotone VIs, there are algorithms that converge to a solution of a VI, when a Minty solution exists; see~\cite{C2} and the references therein. \an{However, the literature studying sufficient conditions for the existence of a solution to a non-monotone VI is scarce, and this paper aims to contribute to that literature.}

In this paper, we first study an unconstrained non-monotone VI using the inverse mapping theorem and derive some sufficient conditions guaranteeing the VI mapping has a zero, which is a solution to the unconstrained VI. In particular, we show that, when the VI mapping is continuously differentiable and its Jacobian is non-singular at all points where the mapping is non-zero, then the VI has a solution if the mapping has a closed range set. We also show that the same result holds if the closed range set condition is replaced with a requirement that the mapping is norm-coercive.
Next, we explore sufficient conditions for the existence of a solution to a constrained VI using the generalized Jacobian and the Clark inverse mapping theorem. Also, we establish some conditions for the existence of a solution to a constrained non-monotone VI based on the degree theory. Specifically, we have shown that a solution exists for the constrained VI when the non-monotone mapping is, in a sense, close to a $\xi$-monotone mapping. We also show the sub-sequential convergence of the Korpelevich and Popov method to solutions of a non-monotone VI, when a Minty solution exists. 
The main contributions of this paper are: \\
\noindent
(1)~Sufficient conditions for the existence of solutions and Minty solutions to {\it non-monotone} VIs constrained over (nonempty) closed and convex sets, potentially {\it unbounded} (Corollary~\ref{Cor-Alternative to main theorem 2}, Corollary~\ref{Cor-Alternative to main theorem 2-prime}).\\
\noindent
(2)~Existence of solutions results for a constrained non-monotone VI with the mapping that is uniformly close to a $\xi$-monotone mapping (Theorem~\ref{Thm-Degree theorem main conclusion}, Theorem~\ref{Thm-Minty sol}).
\\
\noindent
(3)~Sub-sequential convergence results for the Korpelevich and Popov methods for 
non-monotone VIs (Theorem~\ref{Thm-extragradient main variant}, Theorem~\ref{Thm-Popov main variant}). 

This paper builds upon our prior work in~\cite{arefizadeh2024non}. However, this paper has many new results and provides a more detailed and refined analysis compared to its preliminary version. In our prior work~\cite{arefizadeh2024non}, we have considered non-monotone VI and obtained some initial solutions in unconstrained VI and some results regarding the existence of a Minty solution in constrained VI, as well as convergence analysis for the Korpelevich method.
Compared to~\cite{arefizadeh2024non}, this current paper provides a more in-depth analysis of the results on the existence of solutions to constrained non-monotone VIs via norm-coercivity property of the mapping, and the use of the natural and the normal mappings associated with a given VI problem.
Additionally, 
new existence of solution results are provided based on the degree theory. Moreover, the convergence analysis of the Popov method is also new.

The organization of the rest of the paper is as follows: Section \ref{Sec-Notions & Terminology} introduces notions and terminology. 
%where the crucial definitions are discussed after the terminology is widely introduced. 
Section~\ref{Sec-Main Results} presents the main results of the paper regarding the existence of solutions to non-monotone VIs. 
Section~\ref{Sec-Algorithm analysis} provides convergence results for the Korpelevich and Popov methods for non-monotone VIs, while
Section~\ref{Sec-conclusion} concludes the paper with a summary of contributions.

% \an{Sina, Section V - what is the content?}\sa{I added a sentence addressing this comment.}

%---------------------------------------------------
\section{Notions and Terminology} \label{Sec-Notions & Terminology}
%---------------------------------------------------
In this section, we provide some definitions and terminologies about the variational inequalities. 

\begin{defn}
[Variational Inequality Problem \cite{facchinei2003finite}] Given a set $K\subseteq\mathbb{R}^m$ and a mapping $F:K\to\mathbb{R}^m$, the variational inequality problem, denoted by
VI$(K,F)$, consists of determining
   a point $x^*\in K$ such that 
   $$\langle F(x^*),x-x^*\rangle\geq 0\qquad\hbox{for all $x\in K$}.$$ 
   \end{defn}
A point $x^*$ satisfying the preceding inequality is a (strong) solution to the variational inequality problem VI$(K,F)$. The set of all solutions is denoted by SOL$(K,F)$.

Another concept of a solution to a VI($K,F)$ exists, known as weak or Minty solution, defined as follows. 
   \sa{\begin{defn}[Minty Solution Concepts \cite{crespi2005existence}]\label{Def-Strong-Minty}
   Given a VI$(K,F)$ and an $\eta\in\mathbb{R}$, a point $x^*\in K$  such that
   $$\langle F(x),x-x^*\rangle\geq \eta\|x-x^*\|^2\qquad\hbox{for all $x\in K$},$$ is a strong Minty solution to VI$(K,F)$ if $\eta>0$, a Minty solution if $\eta=0$, and a weak Minty solution if $\eta<0$.
\end{defn}}
For the most part, we will deal with a Minty solution ($\eta=0$).The set of all Minty solutions to VI$(K,F)$ is denoted by MSOL$(K,F)$.

A well-studied class of variational inequalities is the class of monotone VIs. The following definition introduces different types of monotonicity properties for a mapping.  
% \begin{definition}[Strongly Monotone and Monotone Maps \cite{C2}]
%    Given a set $K\subseteq\mathbb{R}^m$ and a mapping $F:K\to\mathbb{R}^m$, the mapping $F$ is strongly monotone if for some $\mu>0$ and for all $x,y\in K$, \[\langle F(x)-F(y),x-y\rangle\geq \mu\|x-y\|^2.\]
%     The mapping $F$ is monotone if the preceding relation holds with $\mu=0$.
% \end{definition}
\begin{defn}[Monotone Mappings \cite{C2}]
   Given a set $K\subseteq\mathbb{R}^m$ and a mapping $F:K\to\mathbb{R}^m$, the mapping $F$ is $\xi$-monotone if for some $\xi>1$ there exists a constant $c_F>0$ such that for all $x,y\in K$, \[\langle F(x)-F(y),x-y\rangle\geq c_F\|x-y\|^\xi.\]
   When $\xi=2$, 
   the mapping $F$ is strongly monotone. 
   %if there exists a constant $c_F>0$ such that for all $x,y\in K$, \[\langle F(x)-F(y),x-y\rangle\geq c_F\|x-y\|^2,\]
   Moreover, the mapping $F$ is monotone if the preceding relation holds with $c_F=0$, i.e., for all $x,y\in K$, \[\langle F(x)-F(y),x-y\rangle\geq 0.\]
\end{defn}

Given a VI$(K,F)$, we say that 
the VI is 
$\xi$-monotone, strongly monotone, or monotone when the mapping $F$ has such monotonicity properties, respectively. 

The following result holds for solutions and Minty solutions of a VI.
\begin{lem}[Lemma 2.2 - Minty's Lemma \cite{C2}]\label{Lem-Minty Lemma}
    Let $K \subseteq \mathbb{R}^m$ be a non-empty closed set
    and let $F:K \to\mathbb{R}^m$ be a mapping. The following statements hold:
    \begin{itemize}
        \item[(a)]
        If $F$ is continuous and the set $K$ is convex, then
    every Minty solution to VI$(K,F)$ is also a solution to VI$(K,F)$,  i.e., 
    \[{\rm MSOL}(K,F)\subseteq {\rm SOL}(K,F).\]
    \item[(b)] If $F$ is monotone, then every solution to VI$(K,F)$ is also a Minty solution to VI$(K,F)$, i.e.,
    \[{\rm SOL}(K,F)\subseteq {\rm MSOL}(K,F).\]
    \end{itemize}
\end{lem}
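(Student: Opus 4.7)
For part (a), the plan is to fix an arbitrary $y \in K$ and exploit convexity of $K$ to construct a family of test points approaching $x^*$ from within $K$. Specifically, I would set $x_t = (1-t)x^* + t y$ for $t \in (0,1]$; convexity guarantees $x_t \in K$. Plugging $x = x_t$ into the Minty inequality $\langle F(x_t), x_t - x^*\rangle \geq 0$ and noting that $x_t - x^* = t(y - x^*)$, I can divide through by the positive scalar $t$ to obtain $\langle F(x_t), y - x^*\rangle \geq 0$. The next step is to let $t \to 0^+$; since $x_t \to x^*$ and $F$ is continuous, $F(x_t) \to F(x^*)$, and taking the limit inside the inner product (which is continuous in its first argument) yields $\langle F(x^*), y - x^*\rangle \geq 0$. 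Because $y \in K$ was arbitrary, $x^*$ is a (strong) solution to VI$(K,F)$.

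For part (b), the argument is purely algebraic and does not require continuity or convexity. Starting from the hypothesis that $x^*$ solves VI$(K,F)$, I have $\langle F(x^*), x - x^*\rangle \geq 0$ for every $x \in K$. Monotonicity of $F$ gives the companion inequality $\langle F(x) - F(x^*), x - x^*\rangle \geq 0$. Adding the two inequalities telescopes the $F(x^*)$ terms and leaves $\langle F(x), x - x^*\rangle \geq 0$, which is precisely the Minty condition (with $\eta = 0$).

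The only subtle step is the passage to the limit in part (a): one must make sure that the inequality survives because it is non-strict and because both continuity of $F$ and continuity of the inner product are used. The convexity of $K$ is essential so that the interpolants $x_t$ are admissible test points; without it the ``line of approach'' from $y$ to $x^*$ might leave the feasible set. No additional obstacles arise, as the remaining manipulations are one-line consequences of linearity of the inner product and positivity of $t$.
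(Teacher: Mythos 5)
Your proposal is correct and follows essentially the same route as the paper: in part (a) you form the convex interpolant $x_t=(1-t)x^*+ty$, apply the Minty inequality at $x_t$, divide by $t>0$, and pass to the limit using continuity of $F$; in part (b) you add the monotonicity inequality to the solution inequality, exactly as the paper does. No gaps.
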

\begin{proof}  
(a) Let us consider an $x^*\in {\rm MSOL}(K,F)$. Hence, \begin{equation}\label{eq-minty lemma 1}
\langle F(x),x-x^*\rangle\geq 0\qquad\hbox{for all $x\in K$}.
   \end{equation}
For any arbitrary $x\in K$, consider the point $v=x^*+t(x-x^*)$ where $t\in (0,1]$. Note that $v\in K$ since $x,x^*\in K$ and $K$ is convex. Thus, by using $v\in K$,
from~\eqref{eq-minty lemma 1} we have
\begin{equation*}
%\label{eq-minty lemma 2}
    \langle F(x^*+t(x-x^*)),t(x-x^*)\rangle\geq 0\quad\hbox{for all $x\in K,t\in (0,1]$}.
    \end{equation*}
 Since $t>0$, it follows that
\begin{equation*}
%\label{eq-minty lemma 3}
    \langle F(x^*+t(x-x^*)),x-x^*\rangle\geq 0\qquad\hbox{for all $x\in K,t\in (0,1]$}.
    \end{equation*}
Letting $t$ go to zero and using the continuity of $F$ we have
%\begin{equation*}
%\label{eq-minty lemma 4}\langle F(x^*),x-x^*\rangle\geq 0\qquad\hbox{for all $x\in K$}.
%\end{equation*}
Hence, $x^*\in {\rm SOL}(K,F)$, implying that
\[{\rm MSOL}(K,F)\subseteq {\rm SOL}(K,F).\]
(b) Let $x^*$ be a solution to VI$(K,F)$.
By the monotonicity of $F$, we have
\begin{equation}\label{eq-minty lemma 5}
\langle F(x)-F(x^*),x-x^*\rangle\geq 0.
\end{equation}
Since $x^*\in {\rm SOL}(K,F)$ it follows that $\langle F(x^*),x-x^*\rangle\geq 0$. Therefore, from~\eqref{eq-minty lemma 5} it follows that $\langle F(x),x-x^*\rangle\geq 0$, thus implying that $x^*\in {\rm MSOL}(K,F)$. 

\end{proof}

Combining parts (a) and (b) of Lemma~\ref{Lem-Minty Lemma}, we see that 
for a continuous and monotone mapping $F$
we have
\[{\rm MSOL}(K,F)= {\rm SOL}(K,F).\] 
This result has been shown in Lemma~1.5 of \cite{KSbook}, where both monotonicity and continuity of the mapping $F$ are assumed. Our Lemma~\ref{Lem-Minty Lemma} considers these properties separately to gain a deeper insight into the role of these properties in the relations among the solutions and Minty solutions of a VI$(K,F)$.

%--------------------------------------------------------
\section{Main Results} \label{Sec-Main Results}
%--------------------------------------------------------
\def\Rem{\mathbb{R}^m}

In this section, we develop some sufficient conditions for the existence of solutions to unconstrained and constrained VI problems, separately. For the unconstrained VIs, we consider differentiable mappings. For the constrained VIs, we consider the mappings that are Lipschitz continuous but not necessarily differentiable.

%---------------------------
\subsection{Unconstrained VI}\label{ss-dif-map}
%---------------------------
In this section, we consider an
unconstrained VI$(\Rem,F)$ with 
a differentiable mapping $F:\Rem\to\Rem$.
We use $\nabla F(\cdot)$ for the Jacobian of a mapping $F(\cdot)$ and   $|\nabla F(x)|$ for the determinant of the Jacobian $\nabla F(x)$.
Next, we state the Inverse Mapping Theorem, which we use in the subsequent development.

\begin{thm}[Inverse Mapping Theorem \cite{lebl2018introduction}]\label{Thm-Inverse function}
    Given a vector $\textbf{a}$, let $F(\cdot)$ be a continuously differentiable mapping on some open set containing the vector $\textbf{a}$. Suppose that $|\nabla F(\textbf{a})|\neq 0$. Then, there is an open set $V$ containing the vector $\textbf{a}$ and an open set $W$ containing the vector $F(\textbf{a})$ such that $F:V\rightarrow W$ has a continuous inverse mapping $F^{-1}:W\rightarrow V$, which is continuously differentiable on~$W$.  
\end{thm}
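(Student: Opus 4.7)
The plan is to reduce to a normalized situation and then invoke the Banach contraction mapping theorem. First, by translating and precomposing $F$ with the invertible linear map $(\nabla F(\textbf{a}))^{-1}$ (which exists since $|\nabla F(\textbf{a})|\neq 0$), I may assume without loss of generality that $\textbf{a}=0$, $F(0)=0$, and $\nabla F(0)=I$. Define $g(x)=x-F(x)$, so $\nabla g(0)=0$. Continuity of $\nabla F$ (hence of $\nabla g$) yields some $r>0$ such that $\|\nabla g(x)\|_{\mathrm{op}}\le 1/2$ on the closed ball $\overline{B(0,r)}$, and shrinking $r$ if necessary I may also arrange that $\nabla F(x)$ is invertible for every $x\in \overline{B(0,r)}$ (continuity of the determinant). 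The mean value inequality then gives $\|g(x)-g(y)\|\le \tfrac12\|x-y\|$ on $\overline{B(0,r)}$.

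Next, for any fixed $y$ with $\|y\|<r/2$, I would set $\phi_y(x)=y+g(x)$ and check that $\phi_y$ maps $\overline{B(0,r)}$ into itself and is a $\tfrac12$-contraction. The Banach fixed-point theorem then yields a unique $x\in\overline{B(0,r)}$ with $\phi_y(x)=x$, equivalently $F(x)=y$. Taking $W=B(0,r/2)$ and $V=F^{-1}(W)\cap B(0,r)$, this produces a well-defined inverse $F^{-1}\colon W\to V$, and the contraction estimate immediately upgrades to the Lipschitz bound $\|F^{-1}(y_1)-F^{-1}(y_2)\|\le 2\,\|y_1-y_2\|$, which in particular gives continuity of $F^{-1}$. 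Openness of $V$ then follows from openness of $W$ together with continuity of $F$, since $V=F^{-1}(W)\cap B(0,r)$ is the preimage of an open set intersected with an open ball.

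The remaining step is to establish differentiability of $F^{-1}$ on $W$. For $y_0\in W$ with $x_0=F^{-1}(y_0)$, I would start from the differentiability of $F$ at $x_0$, namely $F(x)-F(x_0)=\nabla F(x_0)(x-x_0)+\rho(x)$ with $\rho(x)=o(\|x-x_0\|)$, and rearrange using invertibility of $\nabla F(x_0)$ to get
\[
F^{-1}(y)-F^{-1}(y_0)=\nabla F(x_0)^{-1}(y-y_0)-\nabla F(x_0)^{-1}\rho(F^{-1}(y)).
\]
The Lipschitz bound on $F^{-1}$ converts the $o(\|x-x_0\|)$ error into $o(\|y-y_0\|)$, yielding $\nabla F^{-1}(y_0)=(\nabla F(x_0))^{-1}$. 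Continuous differentiability of $F^{-1}$ on $W$ then follows because matrix inversion is continuous on the set of invertible matrices and $F^{-1}$ is itself continuous.

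The main obstacle I anticipate is the last step: carefully turning the $o(\|x-x_0\|)$ remainder from the definition of $\nabla F(x_0)$ into an $o(\|y-y_0\|)$ remainder for $F^{-1}$. This is where the Lipschitz estimate obtained from the contraction is essential; without it, one cannot close the loop and identify the derivative of the inverse. Every other step (translation, contraction, shrinking $r$) is standard bookkeeping, but the differentiability argument requires all pieces to be in place simultaneously.
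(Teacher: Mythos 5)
Your proof is correct: it is the standard contraction-mapping argument for the inverse function theorem (normalize so $\nabla F(\textbf{a})=I$, apply the Banach fixed-point theorem to $\phi_y(x)=y+x-F(x)$, extract the Lipschitz bound $\|F^{-1}(y_1)-F^{-1}(y_2)\|\le 2\|y_1-y_2\|$, and use it to identify $\nabla F^{-1}(y_0)=(\nabla F(x_0))^{-1}$). The paper does not prove this statement at all --- it quotes it from the cited textbook, whose proof follows essentially the same route as yours --- so there is nothing to contrast; the only bookkeeping worth making explicit is that the fixed point satisfies $\|x\|\le 2\|y\|<r$, so it lies in the open ball and $F$ is injective on $V$.
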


The following result provides sufficient conditions guaranteeing the existence of a solution to VI$(\mathbb{R}^m,F)$.
\begin{thm} \label{Thm-main theorem}
    Let $F:\mathbb{R}^m\to\mathbb{R}^m$ be a continuously differentiable mapping with a closed range set. i.e., the set $F(\Rem)=\{F(x)\mid x\in\Rem\}$ is closed. Also, assume that 
    $|\nabla{F(x)}|\neq 0$
    for every $x\in \mathbb{R}^m$ satisfying $F(x)\neq 0$. Then, the VI$(\mathbb{R}^m,F)$ has a solution.
\end{thm}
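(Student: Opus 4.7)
The plan is to reduce the problem to showing that $F$ has a zero, since on the unconstrained domain $\mathbb{R}^m$ the VI condition $\langle F(x^*), x - x^*\rangle \geq 0$ for all $x \in \mathbb{R}^m$ is equivalent to $F(x^*) = 0$ (plug in $x = x^* - F(x^*)$ to get $-\|F(x^*)\|^2 \ge 0$, forcing $F(x^*)=0$; the converse is immediate). So the task becomes: under the hypotheses, show $0 \in F(\mathbb{R}^m)$.

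I would proceed by contradiction. Assume $0 \notin F(\mathbb{R}^m)$; then $F(x) \neq 0$ for every $x \in \mathbb{R}^m$, and the hypothesis yields $|\nabla F(x)| \neq 0$ at \emph{every} point $x \in \mathbb{R}^m$. By the Inverse Mapping Theorem (Theorem~\ref{Thm-Inverse function}) applied at each point, $F$ is locally a diffeomorphism everywhere, and in particular $F$ is an open map: for any open $U \subseteq \mathbb{R}^m$ and any $x \in U$, there is an open neighborhood $V \subseteq U$ of $x$ such that $F(V)$ is open and contains $F(x)$, so $F(U)$ is a union of open sets. Consequently $F(\mathbb{R}^m)$ is open in $\mathbb{R}^m$.

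Now I would combine the three facts about the range set $S := F(\mathbb{R}^m)$: it is nonempty (obvious), it is closed (by assumption), and it is open (just shown). Since $\mathbb{R}^m$ is connected, the only nonempty clopen subset of $\mathbb{R}^m$ is $\mathbb{R}^m$ itself, so $S = \mathbb{R}^m$. In particular $0 \in S$, contradicting the standing assumption $0 \notin F(\mathbb{R}^m)$. Therefore $F(x^*)=0$ for some $x^* \in \mathbb{R}^m$, and this $x^*$ solves VI$(\mathbb{R}^m, F)$.

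No step here looks delicate: the translation between VI solutions and zeros of $F$ is routine for the unconstrained case, the openness of $F$ is a direct consequence of the Inverse Mapping Theorem, and the clopen/connectedness argument is standard. The only hypothesis that needs careful placement is that $|\nabla F(x)| \neq 0$ is only assumed where $F(x) \neq 0$; this is exactly enough to make the contradiction argument go through, because in the contradictory scenario $F$ is nowhere zero and so the Jacobian nonsingularity is in force globally. Thus the mildly weakened hypothesis (as opposed to requiring nonsingular Jacobian everywhere) is tight for this proof strategy.
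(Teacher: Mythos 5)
Your proof is correct, but it follows a genuinely different route from the paper's. The paper argues via the infimum $b=\inf_{x\in\mathbb{R}^m}\|F(x)\|$: closedness of the range yields a point $\bar{x}$ with $\|F(\bar{x})\|=b$, and if $b>0$ the hypothesis gives $|\nabla F(\bar{x})|\neq 0$, so the local inverse at $\bar{x}$ produces a point $z$ with $F(z)=(1-\alpha)F(\bar{x})$ and hence $\|F(z)\|<b$, contradicting minimality; thus $b=0$ and $\bar{x}$ is the desired zero. You instead assume $0\notin F(\mathbb{R}^m)$, observe that the Jacobian is then nonsingular everywhere, deduce from the Inverse Mapping Theorem that $F$ is an open map, and conclude that $F(\mathbb{R}^m)$ is a nonempty, open, and closed subset of the connected space $\mathbb{R}^m$, hence all of $\mathbb{R}^m$, contradicting $0\notin F(\mathbb{R}^m)$. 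Both arguments rest on the same two ingredients (local invertibility wherever $F\neq 0$, plus closedness of the range), but yours packages them into a global clopen/connectedness argument that in fact shows $F$ is surjective in the contradictory scenario, whereas the paper's argument is local around a minimizer of $\|F\|$; one payoff of the paper's version is that it transfers essentially verbatim to the norm-coercive variant (Theorem~\ref{Thm-main theorem-prime}), where closedness of the range is replaced by coercivity to secure attainment of the infimum, while your clopen argument uses closedness of the range in an essential way and would need modification there. Your preliminary reduction of the unconstrained VI to finding a zero of $F$ (via the test point $x=x^*-F(x^*)$) is a step the paper leaves implicit, and it is stated correctly.
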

\begin{proof}
    Let $b=\inf_{x\in \mathbb{R}^m} \|F(x)\|$ and let 
    $\{x^k\}$ be a sequence such that $\lim_{k\rightarrow \infty}\|F(x^k)\|=b$.
    Suppose that $b>0$.
Since $\|F(x^k)\|\to b$, it follows that the sequence $\{F(x^k)\}$ is bounded and has a convergent sub-sequence $\{F(x^{k_i})\}$, i.e., $\lim_{i\rightarrow \infty}F(x^{k_i})=\bar{F}$ and $\|\bar{F}\|=b$. 
Since the set $F(\mathbb{R}^m)$ is closed, it follows that $\bar F\in F(\mathbb{R}^m)$. Hence, there is some $\bar{x}\in \mathbb{R}^m$ such that $F(\bar{x})=\bar{F}$. Since $\|\bar{F}\|=b$ and $b>0$, we have that  $F(\bar{x})\ne 0$. By the assumption of the theorem, it follows that $|\nabla{F(\bar{x})}|\neq 0$.

Let $B_r(x)$ be an open ball centered at a point $x\in \mathbb{R}^m$ with a radius $r>0$. Since $|\nabla{F(\bar x)}|\neq 0$,
    by Inverse Mapping Theorem~\ref{Thm-Inverse function}, there exist open balls $B_r(\bar x)$ and $B_{r'}(F(\bar x))$, and a local inverse mapping $F^{-1}_{\bar x}(\cdot):B_{r'}(F(\bar x))\to B_r(\bar x)$ such that $F^{-1}_{\bar x}(v)=u$ for all $v\in B_{r'}\left(F(\bar x)\right)$ and $u\in B_{r}(\bar x)$, where $F(u)=v$.
    Thus, there exists $\alpha\in(0,1)$ such that $$(1-\alpha)F(\bar{x})\in B_{r'}\left(F(\bar{x})\right).$$ 
    Then, by the Inverse Mapping Theorem, we have that $F^{-1}_{\bar{x}}\left((1-\alpha)F(\bar{x})\right)=z$ for some $z\in B_{r}(\bar{x})$, such that $$F(z)=F\left(F^{-1}_{\bar{x}}\left((1-\alpha)F(\bar{x})\right)\right)=(1-\alpha)F(\bar{x}).$$ Hence, $0\leq\|F(z)\|< \|F(\bar{x})\|=b$ with $z\in \mathbb{R}^m$, which cannot hold since $b=\inf_{x\in \mathbb{R}^m} \|F(x)\|$. Therefore, we cannot have $b>0$, implying that  $b=0$ and $F(\bar x)=0$, which means that $\bar{x}\in {\rm SOL} (\mathbb{R}^m,F)$.
    
\end{proof}

The next two lemmas provide sufficient conditions for non-singularity of the Jacobian, $|\nabla{F(x)}| \neq 0$. In what follows, we use $[m]$ to denote the set $\{1,2,\ldots,m\}$ for an integer $m\ge1$.
Also, we use $\nabla_{x_i}f(x)$ to denote the partial derivative of a function $f$ with respect to the variable $x_i$.

\begin{lem}[Weak Coupling Condition]\label{Lem-Weak coupling}
Consider a mapping $F:\mathbb{R}^m\to\mathbb{R}^m$ given by
$F=(F_1,F_2,\ldots, F_m)$, where $F_i:\mathbb{R}^m\to\mathbb{R}$ is the $i$-th component of the mapping $F$ for all $i\in[m]$.
At a given point $x\in\Rem$, if  
\[\left|\nabla_{x_i} F_i(x)\right|>\sum_{\substack{j=1 \\ j \neq i}}^m\left|\nabla_{x_j}F_i(x)\right|
\qquad\hbox{for every $i\in [m]$},\]
 then $|\nabla{F(x)}| \neq 0$.
\end{lem}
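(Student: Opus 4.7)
The plan is to recognize the hypothesis as strict row diagonal dominance of the Jacobian matrix and then invoke the classical Levy--Desplanques theorem, which states that any strictly diagonally dominant matrix is non-singular. Since the lemma is short and self-contained in spirit, I would reprove this fact rather than merely cite it, via a one-step proof by contradiction using an extremal component of a hypothetical null vector.

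Concretely, write $J := \nabla F(x)$ with entries $J_{ij} := \nabla_{x_j} F_i(x)$ for $i,j\in[m]$, so that the hypothesis becomes
\[
|J_{ii}| > \sum_{j\neq i} |J_{ij}| \qquad \text{for every } i\in[m].
\]
Suppose for contradiction that $|\nabla F(x)|=0$. Then $J$ is singular, so there exists a nonzero vector $y=(y_1,\dots,y_m)\in\mathbb{R}^m$ with $Jy=0$. Choose $i^*\in[m]$ with $|y_{i^*}|=\max_{i\in[m]}|y_i|$; since $y\neq 0$, we have $|y_{i^*}|>0$. The $i^*$-th row of $Jy=0$ yields $J_{i^*i^*}y_{i^*} = -\sum_{j\neq i^*} J_{i^*j} y_j$, and taking absolute values together with $|y_j|\le |y_{i^*}|$ gives
\[
|J_{i^*i^*}|\,|y_{i^*}| \;\le\; \sum_{j\neq i^*}|J_{i^*j}|\,|y_j| \;\le\; |y_{i^*}|\sum_{j\neq i^*}|J_{i^*j}|.
\]
Canceling $|y_{i^*}|>0$ produces $|J_{i^*i^*}|\le\sum_{j\neq i^*}|J_{i^*j}|$, which directly contradicts the weak coupling condition at coordinate $i^*$. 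Therefore $|\nabla F(x)|\neq 0$.

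I do not anticipate any real obstacle here; the whole argument is a few lines. The only things to be careful about are the row-versus-column convention for the Jacobian (the hypothesis fixes the row index $i$ and sums across columns $j$, so the dominance is row-wise, which is exactly the form the Levy--Desplanques argument needs) and the non-triviality of $y_{i^*}$, which is immediate from $y\neq 0$. No smoothness or global assumption on $F$ is needed beyond the pointwise existence of the partial derivatives appearing in the statement.
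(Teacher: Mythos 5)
Your proof is correct. It differs from the paper's only in packaging: the paper invokes the Gershgorin Circle Theorem, observes that the weak coupling condition pushes every Gershgorin disk away from the origin, and concludes that $0$ is not an eigenvalue of $\nabla F(x)$, whereas you reprove the underlying Levy--Desplanques fact directly via the extremal-component argument on a hypothetical null vector $y$ with $Jy=0$. The two routes are mathematically the same result seen from two sides---your argument is precisely the standard proof of Gershgorin's theorem specialized to the eigenvalue $0$---so nothing is gained or lost in generality. What the paper's version buys is brevity (one citation, two sentences); what yours buys is self-containedness, since it needs no spectral localization machinery and makes explicit that only the row-wise dominance of $\nabla F(x)$ at the single point $x$ is used. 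Your handling of the row-versus-column convention and of $|y_{i^*}|>0$ is careful and correct.
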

\begin{proof}   
Letting $M_{ij}$ be the $ij$-th element of an $m\times m$ matrix $M$, for the matrix $M$,  we define the Gershgorin disk $C_i\left(M_{ii}, \sum_{\substack{j=1 \\ j \neq i}}^m\left|M_{ij}\right|\right)\subset \mathbb{C}$  as a disk in the complex plane $\mathbb{C}$ centered at $M_{ii}$ with the radius of $\sum_{\substack{j=1, j \neq i}}^m\left|M_{ij}\right|$. By Gershgorin Circle Theorem \cite{bell1965gershgorin}, every eigenvalue of the Jacobian $\nabla F(x)$ lies in one of the Gershgorin disks $C_i\left(\nabla_{x_i} F_i(x), \sum_{\substack{j=1 \\ j \neq i}}^m\left|\nabla_{x_j} F_i(x)\right|\right)$, $i \in[m]$. Under the weak coupling condition, it follows that  $0$ is not in any of the Gershgorin disks. Thus, $0$ is not an eigenvalue of $\nabla F(x)$, implying that $|\nabla{F(x)}| \neq 0$.

\end{proof} 

% \begin{remark}\label{Rem-Compare results}
%     In the case of differentiable $F$, one can show that mapping $F$ is monotone if and only if $\nabla{F(x)}$ is positive semi-definite \cite{C20}. Thus, it tells us the coupling behavior must somehow appear in the condition of existence of a solution for the $VI(\mathbb{R}^m,F)$. Based on this observation, the previous lemma seems quite natural for the case of a positive definite Jacobian. However, it covers some scenarios where the map $F$ is not monotone.

% \end{remark}

We use the following lemma to show that a class of non-monotone unconstrained VIs has a solution.

\begin{lem}\label{Lem-Alternative to be non singular}
Let $F:\mathbb{R}^m\to\mathbb{R}^m.$ Then, for any $x\in\mathbb{R}^m$, we have    $\left|\nabla{F(x)}\right| \neq 0$ if and only if the matrix $\nabla{F(x)} \nabla{F(x)}^T$ is positive definite.
 \end{lem}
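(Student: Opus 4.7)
Set $A = \nabla F(x)$, which is an $m\times m$ matrix. The plan is to show the two conditions are equivalent by reducing both to the non-vanishing of $\det(AA^T)$, using the fact that $AA^T$ is always symmetric positive semidefinite.

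First I would record the basic identity: for every $v \in \mathbb{R}^m$,
\[
v^T A A^T v \;=\; (A^T v)^T (A^T v) \;=\; \|A^T v\|^2 \;\geq\; 0,
\]
so $AA^T$ is symmetric and positive semidefinite regardless of $A$. Consequently, $AA^T$ is positive definite if and only if every eigenvalue is strictly positive, which (since $AA^T$ is symmetric) is equivalent to $AA^T$ being nonsingular, i.e., $\det(AA^T) \neq 0$.

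Next I would invoke the multiplicativity of the determinant and the identity $\det(A^T) = \det(A)$ to obtain
\[
\det(AA^T) \;=\; \det(A)\,\det(A^T) \;=\; (\det A)^2 \;=\; |\nabla F(x)|^2.
\]
Hence $\det(AA^T) \neq 0$ if and only if $|\nabla F(x)| \neq 0$. Chaining this with the previous equivalence yields the claim in both directions.

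There is no real obstacle here; the argument is a short linear-algebra computation, and the only thing to be careful about is making explicit that $AA^T$ being symmetric is what lets us pass from nonsingularity (no zero eigenvalue) to positive definiteness (all eigenvalues strictly positive), which relies on the already-established positive semidefiniteness.
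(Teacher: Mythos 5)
Your argument is correct and is essentially the paper's own proof: both rest on the identity $|\nabla F(x)|^2 = |\nabla F(x)\,\nabla F(x)^T|$ together with the observation that $\nabla F(x)\,\nabla F(x)^T$ is symmetric positive semidefinite, so that nonsingularity is equivalent to positive definiteness. Your write-up merely makes the quadratic-form computation $v^T A A^T v = \|A^T v\|^2$ explicit, which the paper leaves implicit.
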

\begin{proof} We have that
$|\nabla{F(x)}|^2=|\nabla{F(x)} \nabla{F(x)}^T|$. Thus, the determinant $|\nabla{F(x)}|^2$ is zero if and only if $|\nabla{F(x)} \nabla{F(x)}^T|=0$. So, alternatively, we can consider the matrix $\nabla{F(x)}\nabla{F(x)}^T$, which is symmetric. Hence, all the eigenvalues of $\nabla{F(x)} \nabla{F(x)}^T$ are real and the matrix is positive semi-definite.  Therefore, $\left|\nabla{F(x)}\right| \neq 0$ is equivalent to $\nabla{F(x)} \nabla{F(x)}^T$ being positive definite.

 \end{proof}

 As a consequence of Theorem~\ref{Thm-main theorem} and Lemma~\ref{Lem-Alternative to be non singular}, we have the following result.

\begin{cor}\label{Cor-Orthogonal map}
 Let $F:\mathbb{R}^m\to\mathbb{R}^m$ be a continuously differentiable mapping with the closed range set $F(\Rem)$. Assume that $\nabla{F(x)}^{-1}=\nabla{F(x)}^T$
 for all $x\in\mathbb{R}^m$ where $\nabla{F(x)}\neq 0$.  
 %(orthogonal mapping $F$).
Then, the
VI$\left(\mathbb{R}^m, F\right)$ has a solution.
\end{cor}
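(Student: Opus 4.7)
The plan is to deduce the corollary directly from Theorem~\ref{Thm-main theorem} by using Lemma~\ref{Lem-Alternative to be non singular} to turn the orthogonality hypothesis into the non-singularity assumption required by the theorem. Concretely, to invoke Theorem~\ref{Thm-main theorem} I need three ingredients: continuous differentiability of $F$, closedness of $F(\mathbb{R}^m)$, and $|\nabla F(x)|\neq 0$ at every $x$ with $F(x)\neq 0$. The first two are handed to me in the statement, so the whole task reduces to verifying the last one.

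For the non-singularity, the orthogonality hypothesis $\nabla F(x)^{-1}=\nabla F(x)^T$ (which implicitly asserts that $\nabla F(x)$ is invertible at the relevant points) gives immediately
\[
\nabla F(x)\,\nabla F(x)^T \;=\; \nabla F(x)\,\nabla F(x)^{-1} \;=\; I,
\]
and the identity matrix is positive definite. Then, applying Lemma~\ref{Lem-Alternative to be non singular} yields $|\nabla F(x)|\neq 0$ at every such $x$. Thus the Jacobian is non-singular wherever the orthogonality assumption is in force, which supplies the missing hypothesis of Theorem~\ref{Thm-main theorem}, and the existence of a solution to VI$(\mathbb{R}^m,F)$ follows.

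The only subtle point, and the place I would write carefully, is aligning the qualifier in the corollary's assumption (``where $\nabla F(x)\neq 0$'') with the qualifier in Theorem~\ref{Thm-main theorem} (``for every $x$ with $F(x)\neq 0$''). The orthogonality condition is actually vacuous at points where the Jacobian vanishes, so strictly speaking one needs the orthogonality to hold at every $x$ with $F(x)\neq 0$; interpreting the hypothesis in this way (which is clearly the intended reading, since otherwise the corollary could not invoke Theorem~\ref{Thm-main theorem}), the argument above closes the proof. There is no genuine analytic obstacle beyond this bookkeeping, since everything else is a one-line matrix identity followed by an application of the two previous results.
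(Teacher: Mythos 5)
Your proof is correct and follows essentially the same route as the paper's: orthogonality gives $\nabla F(x)\,\nabla F(x)^T=I$, which is positive definite, so Lemma~\ref{Lem-Alternative to be non singular} yields $|\nabla F(x)|\neq 0$ and Theorem~\ref{Thm-main theorem} applies. Your remark about reconciling the qualifier ``where $\nabla F(x)\neq 0$'' with the theorem's ``where $F(x)\neq 0$'' is a genuine and worthwhile observation---the paper's own proof silently glosses over this, and the corollary as literally stated needs exactly the reinterpretation you propose.
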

\begin{proof}
    For all $x\in\mathbb{R}^m$ satisfying $\nabla{F(x)}\neq 0$, we have  $F(x)\nabla F(x)^T=I$, where $I$ is the identity mapping. By Lemma~\ref{Lem-Alternative to be non singular} we have that $|\nabla F(x)|\neq 0$,  and by Theorem~\ref{Thm-main theorem}, VI$\left(\mathbb{R}^m, F\right)$ has a solution.
    
\end{proof}

The condition of Theorem~\ref{Thm-main theorem} requiring the closed range set $F(\Rem)$ is satisfied, for example, when the mapping $F$ is {\it closed}, which is defined as follows~\cite{C1}:
{\it A mapping $F:\mathbb{R}^m\rightarrow \mathbb{R}^m$ is closed if for every closed set $C\subseteq \mathbb{R}^m$, the image set $F(C)$ is closed, where $F(C)=\{F(x)\mid x\in C\}$.
}

Next, we consider a norm-coercive mapping.
\begin{defn}[Norm-coercive\,Mapping\,\cite{facchinei2003finite},\,p.~134]
    A mapping $F:\mathbb{R}^m\to \mathbb{R}^m$ is norm-coercive if it satisfies $\lim_{\|x\|\to\infty} \|F(x)\|=+\infty$. 
\end{defn}

Theorem~\ref{Thm-main theorem} also holds when the closed range set condition is replaced with the requirement that the mapping is norm-coercive, as given
in the following assumption.

\begin{assum}\label{Assum-Semi coercivity}
A mapping $F:\mathbb{R}^m\to\mathbb{R}^m$ is norm-coercive.
 \end{assum}

%\begin{rem}
%\label{rem-coercive} 
%Assumption~\ref{Assum-Semi coercivity} is met when the mapping $F$ has the following property: a sequence  $\{x^k\} \subset \mathbb{R}^m$ is bounded if $\{\|F(x^k)\|\}$ is bounded. 
%\end{rem}

Now, we have the following result showing another set of conditions that are sufficient for the existence of a solution to a VI$(\Rem,F)$.

\begin{thm} \label{Thm-main theorem-prime}
    Let $F:\mathbb{R}^m\rightarrow\mathbb{R}^m$ be a continuously differentiable mapping. Also, let Assumption~\ref{Assum-Semi coercivity} hold and assume that 
    $|\nabla{F(x)}|\neq 0$
    for every $x\in \mathbb{R}^m$ satisfying $F(x)\neq 0$. Then,  the $VI(\mathbb{R}^m,F)$ has a solution.
\end{thm}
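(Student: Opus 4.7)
The plan is to reduce Theorem~\ref{Thm-main theorem-prime} to the already-proved Theorem~\ref{Thm-main theorem}. To do so, I would show that norm-coercivity of a continuous mapping $F:\mathbb{R}^m\to\mathbb{R}^m$ implies that its range set $F(\mathbb{R}^m)$ is closed; once this is in hand, both hypotheses of Theorem~\ref{Thm-main theorem} are met and the conclusion follows immediately.

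The closedness-of-range step is straightforward. Let $\{y^k\}\subseteq F(\mathbb{R}^m)$ be a sequence with $y^k\to y$, and pick $x^k\in\mathbb{R}^m$ with $F(x^k)=y^k$. Since $\{y^k\}$ is convergent and hence bounded, the sequence $\{\|F(x^k)\|\}$ is bounded. If $\{x^k\}$ were unbounded, then along a subsequence $\|x^{k_i}\|\to\infty$, and Assumption~\ref{Assum-Semi coercivity} would force $\|F(x^{k_i})\|\to\infty$, contradicting boundedness. Hence $\{x^k\}$ is bounded, so it has a convergent subsequence $x^{k_i}\to\bar{x}$. By continuity of $F$, $y=\lim_i F(x^{k_i})=F(\bar{x})\in F(\mathbb{R}^m)$, establishing closedness of the range.

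Once closedness of $F(\mathbb{R}^m)$ is established, since $F$ is continuously differentiable and $|\nabla F(x)|\ne 0$ whenever $F(x)\ne 0$ by hypothesis, Theorem~\ref{Thm-main theorem} applies directly and yields a solution to VI$(\mathbb{R}^m,F)$.

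I do not expect a real obstacle here; the whole point of introducing norm-coercivity is to give an easily verifiable alternative to the closed-range condition, and the reduction above is a standard continuity-plus-coercivity argument. An equally valid alternative would be to copy the proof of Theorem~\ref{Thm-main theorem} verbatim, replacing the step where closedness of $F(\mathbb{R}^m)$ is used to extract $\bar{x}$ with $F(\bar{x})=\bar{F}$: namely, take a minimizing sequence $\{x^k\}$ for $\|F(\cdot)\|$, use boundedness of $\{\|F(x^k)\|\}$ together with norm-coercivity to conclude $\{x^k\}$ is bounded, extract a convergent subsequence $x^{k_i}\to\bar{x}$, and appeal to continuity of $F$. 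The inverse-mapping-theorem contradiction argument in the remainder of the proof of Theorem~\ref{Thm-main theorem} then carries over without modification. The cleaner of the two write-ups is the reduction, and that is what I would submit.
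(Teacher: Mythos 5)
Your proposal is correct, and your primary write-up (the reduction) is a slightly different organization from the paper's. The paper does not pass through closedness of the range at all: it re-runs the argument of Theorem~\ref{Thm-main theorem} from scratch, taking a minimizing sequence $\{x^k\}$ for $\inf_{x}\|F(x)\|$, using norm-coercivity to conclude that $\{x^{k_i}\}$ is bounded, extracting a limit $\bar x$ with $F(\bar x)=\bar F$ by continuity, and then repeating the inverse-mapping-theorem contradiction to force $b=0$ --- i.e.\ exactly the ``alternative'' you sketch in your last paragraph. Your reduction instead isolates the implication ``norm-coercive $+$ continuous $\Rightarrow$ $F(\mathbb{R}^m)$ closed'' (your argument for this is sound: boundedness of $\{\|F(x^k)\|\}$ plus coercivity bounds the preimages, and continuity passes to the limit), after which Theorem~\ref{Thm-main theorem} applies verbatim. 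What the reduction buys is conceptual clarity: it exhibits Theorem~\ref{Thm-main theorem-prime} as a literal corollary of Theorem~\ref{Thm-main theorem}, showing the coercivity hypothesis is simply a checkable sufficient condition for the closed-range hypothesis, and it avoids duplicating the inverse-mapping argument; the paper's route avoids stating an extra lemma but repeats part of the earlier proof. Both are complete and correct.
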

\begin{proof}
Let $b=\inf_{x\in \mathbb{R}^m} \|F(x)\|$ and  let $\{x^k\}$ be a sequence such that $\lim_{k\rightarrow \infty}\|F(x^k)\|=b$. 
Thus, the sequence $\{F(x^k)\}$ is bounded and has a convergent sub-sequence $\{F(x^{k_i})\}$ with $\lim_{i\rightarrow \infty}F(x^{k_i})=\bar{F}$, where $\|\bar{F}\|=b$. Moreover, by Assumption~\ref{Assum-Semi coercivity}, the sequence $\{x^{k_i}\}$ is also bounded and, consequently, has a sub-sequence converging to some $\bar{x}$. 
Along this sub-sequence, the mapping values $F(x^{k_i})$ are converging to $\bar{F}$. Without loss of generality, we may assume that $\lim_{i\to\infty}x^{k_i}=\bar x$ and  $\lim_{i\to\infty}F(x^{k_i})=\bar F$. Since $F$ is continuous, it follows that  
$F(\bar x)=\bar F$, where $\|\bar F\|=b$.
To arrive at a contradiction,  assume that $b>0$. By the Inverse Mapping Theorem (Theorem~\ref{Thm-Inverse function}), since $|\nabla{F(\bar x)}|\neq 0$, there are open balls $B_r(\bar x)$ and $B_{r'}\left(F(\bar x)\right)$, and a locally invertible mapping $F^{-1}_{\bar x}(v)$ (for the mapping $F$) such that $F^{-1}_{\bar x}(v)=u$ for all $v\in B_{r'}\left(F(\bar x)\right)$ and $u\in B_{r}(\bar x)$, where $F(u)=v$.
From now onward, the proof follows the same line of analysis as that of Theorem~\ref{Thm-main theorem}, leading to a contradiction that   
$0\leq\|F(z)\|< \|F(\bar{x})\|=b$ for some $z\in \mathbb{R}^m$. Therefore,  we must have $b=0$ and $F(\bar x)=\bar{F}=0$, implying that $\bar{x}$ is a solution of the VI$(\mathbb{R}^m,F)$.

\end{proof}

%-------------------------------------------
\subsection{Constrained VI}\label{ss-const}
%-------------------------------------------
In this section, we \an{investigate sufficient conditions for the existence of solutions to} constrained variational inequality problems using two different approaches. Firstly, we extend the results for an unconstrained VI of Section~\ref{ss-dif-map} to the constrained case based on the inverse mapping theory. Secondly, we establish the existence results based on the degree theory.
%-------------------------------------------
\subsubsection{Inverse Mapping Based Approach}
%-------------------------------------------
We consider 
a constrained VI problem VI$(K,F)$, where $K$ is a nonempty, closed, and convex set, not necessarily bounded, and we establish the existence of solution results that parallel the unconstrained VI case in the sense that we use the inverse mapping theory but in a non-differentiable setting. To proceed with this, following a standard approach~\cite{facchinei2003finite},
we use an alternative formulation of the VI$(K,F)$ problem  related to finding a zero of the natural mapping associated with the VI$(K,F)$, defined as follows:
\[F_K^{\rm nat}(v)=v-\Pi_K\left[v-F(v)\right]
\qquad\hbox{for all $v\in\mathbb{R}^m$},\] where $F:\Rem\to\mathbb{R}^m$ and $\Pi_K[\cdot]$ is the Euclidean projection on the closed convex set $K\subseteq\mathbb{R}^m$, i.e., $\Pi_K[z]=\argmin_{x\in K}\|x-z\|^2$.
The following theorem relates the solutions of the VI$(K,F)$ problem with the zeros of its associated natural mapping $F^{\nat}_K$.

\begin{thm}[Proposition 1.5.8 \cite{facchinei2003finite}]\label{Thm-auxiliary-natural map solution}
    Let $K\subseteq\Rem$ be a nonempty, closed, and convex set, and let
    $F:\an{\Rem}\rightarrow \mathbb{R}^m$ be a mapping. Then, we have
    \begin{equation}
        [x^*\in {\rm SOL}(K,F)] \iff [F_K^{\nat}(x^*)=0].
    \end{equation}
\end{thm}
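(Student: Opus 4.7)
The plan is to prove the equivalence via the standard variational characterization of the Euclidean projection onto a nonempty closed convex set: for any $z\in\mathbb{R}^m$ and $y\in K$, we have $y=\Pi_K[z]$ if and only if $\langle z-y,\,x-y\rangle\le 0$ for all $x\in K$. I would begin by stating (and briefly justifying, if needed) this characterization, which follows from the first-order optimality conditions for the strongly convex problem $\min_{x\in K}\|x-z\|^2$.

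For the forward direction, I would suppose $x^*\in {\rm SOL}(K,F)$, so $x^*\in K$ and $\langle F(x^*),\,x-x^*\rangle\ge 0$ for all $x\in K$. Rewriting this as $\langle (x^*-F(x^*))-x^*,\,x-x^*\rangle\le 0$ for all $x\in K$, the projection characterization (applied with $z=x^*-F(x^*)$ and $y=x^*\in K$) gives $x^*=\Pi_K[x^*-F(x^*)]$. Hence $F_K^{\nat}(x^*)=x^*-\Pi_K[x^*-F(x^*)]=0$.

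For the reverse direction, I would assume $F_K^{\nat}(x^*)=0$, i.e., $x^*=\Pi_K[x^*-F(x^*)]$. Since projections land in $K$, this immediately yields $x^*\in K$. Applying the projection characterization again, $\langle (x^*-F(x^*))-x^*,\,x-x^*\rangle\le 0$ for all $x\in K$, which simplifies to $\langle F(x^*),\,x-x^*\rangle\ge 0$ for all $x\in K$, so $x^*\in {\rm SOL}(K,F)$.

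There is no real obstacle here; the entire proof is a symmetric application of the projection inequality in both directions, and the only point that requires any care is being explicit about the sign manipulation when passing between $\langle F(x^*),x-x^*\rangle\ge 0$ and $\langle (x^*-F(x^*))-x^*,x-x^*\rangle\le 0$. No additional hypothesis (continuity, monotonicity, etc.) on $F$ is needed; the result is purely geometric, relying only on closedness and convexity of $K$ so that $\Pi_K$ is well-defined and characterized by the variational inequality above.
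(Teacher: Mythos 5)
Your proof is correct: the two-way application of the variational characterization of the projection, $y=\Pi_K[z]$ iff $y\in K$ and $\langle z-y,\,x-y\rangle\le 0$ for all $x\in K$, with $z=x^*-F(x^*)$, is exactly the standard argument for this equivalence, and you correctly handle the only delicate points (membership $x^*\in K$ in each direction and the sign flip). The paper itself does not prove this statement but cites it as Proposition 1.5.8 of Facchinei--Pang, whose proof proceeds along the same lines as yours, so there is nothing to add or reconcile.
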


The natural mapping $F^{\nat}_K$ need not be differentiable even when $F$ is differentiable.
 Thus, to obtain an alternative to 
 Theorem~\ref{Thm-main theorem} 
 where the mapping differentiability assumption is relaxed, we assume that \an{$F:\Rem\to\Rem$ is Lipschitz continuous i.e., for some $L>0$, 
    \[\|F(x)-F(y)\|\le L\|x-y\|\quad\hbox{for all }x,y\in \Rem.\]
    When $F:K\to\Rem$, with $K\subseteq\Rem$, then we say that $F$ is Lipschitz continuous on $K$, if the preceding relation holds restricted to the set $K$, i.e., for all $x,y\in K$.}
 
 The Lipschitz continuity of $F$ will allow us to use a variant of the Inverse Mapping
 Theorem~\ref{Thm-Inverse function} applicable to mappings that are not necessarily differentiable. To this end, we first define several concepts in the sequel.
\begin{defn}[Generalized\,Jacobian, Def.\,1\,\cite{clarke1976inverse}] \label{Def-Generalized Jacobian}
The generalized Jacobian of a mapping $F$ at a point $x\in \mathbb{R}^m$, denoted by $\partial F(x)$, is the convex hull of all matrices $M$ of the following form: 
\begin{equation}
\label{eq-gen-jac}
    M=\lim_{k\rightarrow \infty} \nabla F(x^k),
\end{equation}
where $\lim_{k\to\infty}x^k=x$ and $F$ is differentiable at $x^k$ for all~$k$.
\end{defn}

We use the following result for the generalized Jacobian.

\begin{thm}[$\partial F(\cdot)$ Properties, Proposition 1 \cite{clarke1976inverse}]\label{Thm-Proposition 1 of Clark paper}
    Let mapping $F$ be Lipschitz continuous in a neighborhood of a point $x\in \mathbb{R}^m$. Then, the generalized Jacobian $\partial F(x)$ is a nonempty, compact, and convex set in the space $\mathcal{M}$ of all square matrices of dimension $m$ topologized with norm $\|M\|=\max_{1\le i,j\le m}|m_{i,j}|$.
\end{thm}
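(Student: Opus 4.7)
The plan is to verify the three properties in the order they appear in the statement: nonemptiness via Rademacher's theorem, convexity directly from the definition, and compactness via the entrywise Lipschitz bound together with a diagonal argument to close up the set of limit Jacobians.

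For nonemptiness, Lipschitz continuity of $F$ on a neighborhood $U$ of $x$ implies by Rademacher's theorem that $F$ is differentiable at almost every point of $U$ with respect to Lebesgue measure. I can therefore select a sequence $\{x^k\}\subset U$ with $x^k\to x$ at which $\nabla F(x^k)$ exists for every $k$. Since any partial derivative of a Lipschitz function with constant $L$ is bounded in absolute value by $L$ (apply the Lipschitz estimate to difference quotients along a coordinate direction and take the limit), each $\nabla F(x^k)$ lies in the compact set $\{M\in\mathcal M:\|M\|\le L\}$. Bolzano--Weierstrass extracts a convergent subsequence, and its limit $M$ belongs to $\partial F(x)$ by Definition~\ref{Def-Generalized Jacobian}.

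Convexity is immediate because Definition~\ref{Def-Generalized Jacobian} defines $\partial F(x)$ as a convex hull. For compactness, let $\Lambda(x)$ denote the set of all matrices $M$ obtainable as in~(\ref{eq-gen-jac}). Boundedness is inherited from the Lipschitz bound, so $\Lambda(x)\subseteq\{M:\|M\|\le L\}$. For closedness of $\Lambda(x)$, I would use a diagonal argument: given $\{M_n\}\subset\Lambda(x)$ with $M_n\to M$, for each $n$ pick a sequence $\{x^{k,n}\}_k\subset U$ with $x^{k,n}\to x$, $F$ differentiable at each $x^{k,n}$, and $\nabla F(x^{k,n})\to M_n$. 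Choose indices $k_n$ so that $\|x^{k_n,n}-x\|<1/n$ and $\|\nabla F(x^{k_n,n})-M_n\|<1/n$ simultaneously. The diagonal sequence $y^n:=x^{k_n,n}$ then satisfies $y^n\to x$, $F$ is differentiable at every $y^n$, and $\nabla F(y^n)\to M$, hence $M\in\Lambda(x)$. Since $\Lambda(x)$ is a compact subset of the finite-dimensional space $\mathcal M\cong\mathbb R^{m^2}$, Carath\'eodory's theorem implies that $\partial F(x)=\mathrm{conv}\,\Lambda(x)$, being the convex hull of a compact set in finite dimensions, is itself compact.

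The main obstacle is the appeal to Rademacher's theorem, without which even the existence of a single sequence $x^k\to x$ at which $F$ is differentiable would be in doubt and the definition of $\partial F(x)$ could collapse. The diagonal argument for closedness is routine bookkeeping, and the boundedness step reduces to the elementary observation that Lipschitz functions have bounded difference quotients. Both steps rely essentially on the assumption that $F$ is Lipschitz on an entire neighborhood of $x$, rather than merely continuous or Lipschitz at the single point $x$.
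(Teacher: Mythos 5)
Your proof is correct. The paper does not prove this statement at all---it is imported verbatim from Clarke's 1976 paper as a cited result---and your argument (Rademacher's theorem for nonemptiness, the Lipschitz bound on difference quotients to confine all limit Jacobians to the ball of radius $L$ in the entrywise max norm, a diagonal extraction to show the set of limit matrices is closed, and compactness of the convex hull of a compact set in $\mathbb{R}^{m^2}$ via Carath\'eodory) is precisely the standard proof underlying that citation, so there is nothing to correct or compare beyond noting that your reconstruction matches the source.
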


We also use the notion of a generalized Jacobian of maximal rank, defined as follows.
\begin{defn}[Definition 2 \cite{clarke1976inverse}] \label{Def-fullrank Generalized Jacobian}
A generalized Jacobian
$\partial F(x)$ is said to be of maximal rank if every matrix $M$ in the definition of $\partial F(x)$ (see~\eqref{eq-gen-jac}) has a full rank.
\end{defn}

The following theorem, known as the Clark Inverse Mapping Theorem, is the key to extending our Theorem~\ref{Thm-main theorem} to the case of a constrained VI$(K,F)$.

\begin{thm}[Clark Inverse Theorem 1, \cite{clarke1976inverse}] \label{Thm-Clark nverse mapping theorem}
Let $\Phi:\mathbb{R}^m\to\mathbb{R}^m$ be a mapping.
    Let $\partial \Phi(x_0)$ be of a maximal rank for some $x_0\in\mathbb{R}^m$. Then, there exist neighborhoods $U$ and $V$ of $x_0$ and  $\Phi(x_0)$, respectively, and a Lipschitz continuous mapping $G:V\to \mathbb{R}^m$ such that 
    %\begin{itemize}\item [a)] 
    \begin{align*}
        G\left(\Phi(u)\right)&=u\qquad \hbox{for every $u\in U$},\\
        \Phi\left(G(v)\right)&=v\qquad\hbox{for every $v\in V$}.
    \end{align*}
\end{thm}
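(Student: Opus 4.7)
The plan is to follow the same three-step template used in Theorem~\ref{Thm-main theorem}: propagate a non-degeneracy condition from $x_0$ to a full neighborhood, extract a one-sided Lipschitz lower bound for $\Phi$ on that neighborhood, and then run a local minimization argument to produce preimages. The only complication is that $\Phi$ is nonsmooth, so the ordinary Jacobian must be replaced throughout by the generalized Jacobian $\partial\Phi$, and the classical Inverse Mapping Theorem applied only at points of classical differentiability along approximating sequences.

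First, I would use the maximal-rank hypothesis together with Theorem~\ref{Thm-Proposition 1 of Clark paper} to extract constants $\delta>0$ and $c>0$ such that every $M\in\partial\Phi(x)$ with $x\in B_\delta(x_0)$ is invertible and satisfies $\|Mv\|\ge c\|v\|$ for all $v\in\mathbb{R}^m$. At $x_0$ this is immediate from maximal rank, the compactness of $\partial\Phi(x_0)$, and continuity of the smallest singular value on matrices; extending the bound to a full neighborhood uses upper semicontinuity of $x\mapsto\partial\Phi(x)$, which in turn rests on Rademacher's theorem (Lipschitz continuity of $\Phi$ gives classical differentiability almost everywhere) and Definition~\ref{Def-Generalized Jacobian}. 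Then I would invoke the nonsmooth mean-value identity
\[
\Phi(x)-\Phi(y)=\int_0^1 M_t(x-y)\,dt
\]
for some measurable selection $M_t\in\partial\Phi\bigl(y+t(x-y)\bigr)$, obtained by integrating classical derivatives along the segment on the a.e.-differentiability set and passing to limits inside the generalized Jacobian. Combined with the uniform lower bound $c$, this yields $\|\Phi(x)-\Phi(y)\|\ge c\|x-y\|$ on $B_\delta(x_0)$, giving local injectivity.

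For local surjectivity, I would mirror the contradiction argument from the proof of Theorem~\ref{Thm-main theorem}. Let $V$ be an open ball around $\Phi(x_0)$ of radius strictly smaller than $c\delta/2$, chosen so that the Lipschitz lower bound forces any candidate minimizer to stay strictly interior to $B_\delta(x_0)$. For $v\in V$, minimize the continuous function $x\mapsto\|\Phi(x)-v\|$ over the compact ball $\overline{B_\delta(x_0)}$; a minimizer $\bar x$ exists in the interior. If $\Phi(\bar x)\ne v$, take a sequence $x^k\to\bar x$ at which $\Phi$ is classically differentiable with $\nabla\Phi(x^k)\to M\in\partial\Phi(\bar x)$; since $M$ has full rank, applying Theorem~\ref{Thm-Inverse function} locally at $x^k$ for large $k$ produces a nearby point $z$ with $\|\Phi(z)-v\|<\|\Phi(\bar x)-v\|$, contradicting minimality. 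Hence $\Phi(\bar x)=v$, and uniqueness from the injectivity step lets us define $G(v):=\bar x$. The Lipschitz estimate $\|G(v)-G(v')\|\le c^{-1}\|v-v'\|$ is then immediate from the lower bound on $\Phi$.

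The main obstacle is the nonsmooth mean-value identity in the first step. In the smooth case one simply integrates $\nabla\Phi$ along a segment, but here a given segment $\{y+t(x-y):t\in[0,1]\}$ may, in principle, miss all points of classical differentiability. The remedy is a Fubini-type argument combined with Rademacher's theorem, which guarantees that for almost every segment the integrand is well-defined a.e., together with a limiting procedure that realizes $M_t$ as the limit of $\nabla\Phi(x^n)$ for some $x^n\to y+t(x-y)$, thereby placing $M_t$ into $\partial\Phi\bigl(y+t(x-y)\bigr)$ via Definition~\ref{Def-Generalized Jacobian}. This is the technical heart of Clarke's proof and carries the nonsmooth analytic machinery needed to simulate the classical inverse function theorem in the Lipschitz setting.
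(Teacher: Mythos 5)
The paper itself gives no proof of this statement—it is imported verbatim from Clarke's 1976 paper (and, as Clarke's setting requires, one must read a Lipschitz hypothesis on $\Phi$ near $x_0$ into it, which you implicitly use via Rademacher). Measured on its own merits, your reconstruction has two genuine gaps. First, the lower-bound step: from ``every $M\in\partial\Phi(x)$ with $x\in B_\delta(x_0)$ satisfies $\|Mv\|\ge c\|v\|$'' together with $\Phi(x)-\Phi(y)=\int_0^1 M_t(x-y)\,dt$ you conclude $\|\Phi(x)-\Phi(y)\|\ge c\|x-y\\|$. That inference is invalid: an average of vectors each of norm at least $c\|x-y\|$ can be arbitrarily short (take $M_{t}=I$ for $t<1/2$ and $M_{t}=-I$ for $t>1/2$; the integral vanishes). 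What the argument needs is a singular-value bound for the averaged matrix $\int_0^1 M_t\,dt$, which lies in the convex hull of $\bigcup_{\xi\in[x,y]}\partial\Phi(\xi)$, not in any single $\partial\Phi(\xi)$. The repair uses your own ingredients but must be stated on the convexified object: by upper semicontinuity, $\partial\Phi(\xi)\subseteq\partial\Phi(x_0)+\epsilon B$ for all $\xi\in B_\delta(x_0)$ (with $B$ the unit ball of matrices); since $\partial\Phi(x_0)$ is convex and compact (Theorem~\ref{Thm-Proposition 1 of Clark paper}) and consists of full-rank matrices, the convex set $\partial\Phi(x_0)+\epsilon B$ carries a uniform lower singular-value bound for small $\epsilon$, and it contains the convex hull of the union, hence the averaged matrix. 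As written, you prove the pointwise statement and then silently use the convexified one.

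Second, the surjectivity step misapplies Theorem~\ref{Thm-Inverse function}. That theorem requires $\Phi$ to be continuously differentiable on an open set containing the point, whereas a Lipschitz $\Phi$ is only pointwise differentiable at the Rademacher points $x^k$; pointwise differentiability with invertible derivative does not license the $C^1$ inverse theorem, and indeed need not give local invertibility at all (cf.\ $x\mapsto x+2x^2\sin(1/x)$ at $0$). Even if you replace it with a degree-type ``openness at a differentiability point,'' the openness radius $\rho_k$ at $x^k$ is not uniform in $k$, while $\|\Phi(x^k)-v\|$ can exceed the minimal value $\|\Phi(\bar x)-v\|$ by as much as $L\|x^k-\bar x\|$, so the strict decrease $\|\Phi(z)-v\|<\|\Phi(\bar x)-v\|$ does not follow. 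A clean fix stays at $\bar x$: since $\bar x$ is an interior minimizer of $x\mapsto\|\Phi(x)-v\|^2$, the nonsmooth first-order optimality condition and the chain rule for the generalized Jacobian yield $M^\top\left(\Phi(\bar x)-v\right)=0$ for some $M\in\partial\Phi(\bar x)$, and the maximal-rank property (propagated to the neighborhood as in the first step) forces $\Phi(\bar x)=v$. With these two repairs, and granting the measurable-selection mean-value inclusion that you correctly flag as the technical heart, the remaining steps—interiority of the minimizer, uniqueness from the lower bound, and the $c^{-1}$-Lipschitz estimate for $G$—do go through.
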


The following corollary gives a sufficient condition to have a solution for VI$(K,F)$.

\begin{cor}\label{Cor-Alternative to main theorem}
    Let set $K\subseteq\mathbb{R}^m$ be nonempty closed convex and let $F:\an{\Rem}\to\Rem$ be Lipschitz continuous. 
    Assume that $F_K^{\nat}:\Rem\to\Rem$ has a closed range set. Also, assume that the generalized Jacobian $\partial F_K^{\nat}(x)$ has a maximal rank for every $x\in \mathbb{R}^m$ where $F_K^{\nat}(x)\neq 0$. Then, the VI$(K,F)$ has a solution.
\end{cor}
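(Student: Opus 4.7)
The plan is to mirror the argument of Theorem~\ref{Thm-main theorem}, but applied to the natural mapping $F_K^{\nat}$ instead of $F$, and using the Clark Inverse Mapping Theorem~\ref{Thm-Clark nverse mapping theorem} in place of the classical Inverse Mapping Theorem~\ref{Thm-Inverse function}. The final conclusion then follows from Theorem~\ref{Thm-auxiliary-natural map solution}, which translates a zero of $F_K^{\nat}$ into an element of SOL$(K,F)$.

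First, I would note that $F_K^{\nat}$ is Lipschitz continuous on $\Rem$, since it is built from the Lipschitz mapping $F$ and the projection $\Pi_K$ (which is nonexpansive for a nonempty closed convex $K$). This guarantees that $\partial F_K^{\nat}(x)$ is nonempty, compact and convex at every $x$ (Theorem~\ref{Thm-Proposition 1 of Clark paper}), so the maximal rank hypothesis is meaningful. I would then set $b=\inf_{x\in\Rem}\|F_K^{\nat}(x)\|$ and pick a sequence $\{x^k\}$ with $\|F_K^{\nat}(x^k)\|\to b$. The sequence $\{F_K^{\nat}(x^k)\}$ is bounded, and passing to a subsequence I obtain $F_K^{\nat}(x^{k_i})\to \bar F$ with $\|\bar F\|=b$. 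The closed range hypothesis yields a point $\bar x\in\Rem$ with $F_K^{\nat}(\bar x)=\bar F$.

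Next, for contradiction assume $b>0$. Then $F_K^{\nat}(\bar x)\ne 0$, so the hypothesis gives that $\partial F_K^{\nat}(\bar x)$ has maximal rank. By Theorem~\ref{Thm-Clark nverse mapping theorem} applied to $\Phi=F_K^{\nat}$, there exist neighborhoods $U$ of $\bar x$ and $V$ of $F_K^{\nat}(\bar x)$ and a Lipschitz continuous local inverse $G:V\to\Rem$ satisfying $F_K^{\nat}(G(v))=v$ on $V$. Since $V$ is open and contains $F_K^{\nat}(\bar x)$, I can choose $\alpha\in(0,1)$ small enough that $(1-\alpha)F_K^{\nat}(\bar x)\in V$, and then set $z=G\bigl((1-\alpha)F_K^{\nat}(\bar x)\bigr)\in\Rem$. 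This gives $F_K^{\nat}(z)=(1-\alpha)F_K^{\nat}(\bar x)$, hence $\|F_K^{\nat}(z)\|=(1-\alpha)b<b$, contradicting the definition of $b$. Therefore $b=0$ and $F_K^{\nat}(\bar x)=0$, so by Theorem~\ref{Thm-auxiliary-natural map solution} we conclude $\bar x\in{\rm SOL}(K,F)$.

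The main obstacle, as I see it, is entirely in the setup rather than in the chain of implications: I must be careful that the Clark Inverse Mapping Theorem really applies to $F_K^{\nat}$ at $\bar x$, which in turn requires Lipschitz continuity of $F_K^{\nat}$ near $\bar x$ plus maximal rank of $\partial F_K^{\nat}(\bar x)$. Both are granted by hypothesis (once Lipschitzness of the projection is invoked). After that, the contradiction via rescaling $F_K^{\nat}(\bar x)$ by $(1-\alpha)$ is a direct transcription of the scaling trick from Theorem~\ref{Thm-main theorem}, and the final step is simply Theorem~\ref{Thm-auxiliary-natural map solution}.
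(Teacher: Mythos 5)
Your proof is correct and follows essentially the same route as the paper: establish Lipschitz continuity of $F_K^{\nat}$ via nonexpansiveness of the projection, run the infimum/closed-range argument of Theorem~\ref{Thm-main theorem} on $F_K^{\nat}$ with the Clark Inverse Mapping Theorem replacing the classical one, and conclude via Theorem~\ref{Thm-auxiliary-natural map solution}. The only difference is that you write out in full the scaling contradiction that the paper leaves implicit by reference to Theorem~\ref{Thm-main theorem}.
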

\begin{proof}
For the natural mapping $F_K^{\nat}$ we have \an{for all $u,v\in\Rem$},
\begin{align*}
    \|F_K^\nat(u) &-F_K^\nat(v)\|\le \|u-v\|+\|\Pi_K[u-F(u)]-\Pi_K[v-F(v)]\|.
\end{align*}
Since the set $K$ is nonempty, closed, and convex, 
the following non-expansiveness property of the projection mapping holds 
\[\|\Pi_K[u]-\Pi_K[v]\|\le \|u-v\| \qquad\hbox{for all $u,v\in \Rem$.}\]
Thus, it follows that \an{for all $u,v\in\Rem$},
\begin{align*}
    \|F_K^\nat(u)-F_K^\nat(v)\|\le 2\|u-v\| +\|F(u)-F(v)\|\le (2+L)\|u-v\|,
\end{align*}
where we use Lipschitz continuity of $F$.  Thus,  $F_K^{\nat}$ is Lipschitz continuous with constant $2+L$
%Since $F_K^\nat$ is Lipschitz continuous, 
and, by Theorem~\ref{Thm-Proposition 1 of Clark paper}, 
the generalized Jacobian $\partial F_K^{\nat}(x)$ is nonempty.
The rest of 
    the proof follows along the same line of analysis as that of the proof of Theorem~\ref{Thm-main theorem}, where we replace $F_K^{\nat}(x)$  with $F(x)$ and use the Clark Inverse Mapping theorem instead of  the Inverse Mapping theorem. Also, we use the relation for the zeros of the natural mapping $F_K^{\nat}(\cdot)$ and the solutions of the VI$(K,F)$ given in Theorem~\ref{Thm-auxiliary-natural map solution}.
    
\end{proof}
%NOTE This result will be proven in the paperversion - not a corollary - a min result.

The following result is an analog of Theorem~\ref{Thm-main theorem-prime} for the case when the mapping $F$ is not continuously differentiable but Lipschitz continuous instead.

\begin{cor}\label{Cor-Alternative to main theorem 2}
   Let $K\subseteq\Rem$ be closed convex set and $F:\an{\Rem}\to\Rem$ be Lipschitz continuous mapping. Let the mapping $F_K^{\rm nat}$ be norm-coercive (Assumption~\ref{Assum-Semi coercivity}) 
   and let the generalized Jacobian {$\partial F_K^{\rm nat}(x)$ be of maximal rank for every $x\in \mathbb{R}^m$ satisfying $F_K^{\rm nat}(x)\neq 0$.} Then, the VI$(K,F)$ has a solution.
\end{cor}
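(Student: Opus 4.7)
The plan is to combine the proof strategy of Theorem~\ref{Thm-main theorem-prime} with the substitutions used in Corollary~\ref{Cor-Alternative to main theorem}: everywhere the earlier argument uses the Inverse Mapping Theorem on $F$, we instead use the Clark Inverse Mapping Theorem (Theorem~\ref{Thm-Clark nverse mapping theorem}) on the natural mapping $F_K^{\rm nat}$, which is Lipschitz continuous by the estimate $\|F_K^{\rm nat}(u)-F_K^{\rm nat}(v)\|\le (2+L)\|u-v\|$ established in the proof of Corollary~\ref{Cor-Alternative to main theorem}. At the end, the relation between zeros of $F_K^{\rm nat}$ and solutions of VI$(K,F)$ given by Theorem~\ref{Thm-auxiliary-natural map solution} will convert the zero into a solution.

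First, I would set $b=\inf_{x\in\Rem}\|F_K^{\rm nat}(x)\|$ and pick a sequence $\{x^k\}$ with $\|F_K^{\rm nat}(x^k)\|\to b$. Norm-coercivity of $F_K^{\rm nat}$ (Assumption~\ref{Assum-Semi coercivity}) forces $\{x^k\}$ to be bounded, so, passing to a subsequence, $x^{k_i}\to\bar x$. Continuity of $F_K^{\rm nat}$ (from its Lipschitz property) then gives $F_K^{\rm nat}(\bar x)=\bar F$ with $\|\bar F\|=b$.

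Next, toward a contradiction, assume $b>0$, so $F_K^{\rm nat}(\bar x)\neq 0$. By hypothesis, $\partial F_K^{\rm nat}(\bar x)$ is of maximal rank, so Theorem~\ref{Thm-Clark nverse mapping theorem} applies to $\Phi:=F_K^{\rm nat}$: there exist neighborhoods $U$ of $\bar x$ and $V$ of $F_K^{\rm nat}(\bar x)$, and a Lipschitz inverse $G:V\to\Rem$ with $\Phi(G(v))=v$ on $V$ and $G(\Phi(u))=u$ on $U$. Choose $\alpha\in(0,1)$ small enough that $(1-\alpha)F_K^{\rm nat}(\bar x)\in V$, and set $z=G((1-\alpha)F_K^{\rm nat}(\bar x))\in\Rem$. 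Then $F_K^{\rm nat}(z)=(1-\alpha)F_K^{\rm nat}(\bar x)$, so $\|F_K^{\rm nat}(z)\|=(1-\alpha)b<b$, contradicting the definition of $b$. Hence $b=0$, which gives $F_K^{\rm nat}(\bar x)=0$, and Theorem~\ref{Thm-auxiliary-natural map solution} yields $\bar x\in{\rm SOL}(K,F)$.

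The only substantive obstacle is verifying that the Clark Inverse Mapping Theorem can be applied to $F_K^{\rm nat}$, which reduces to two facts already in hand: the Lipschitz bound on $F_K^{\rm nat}$ (from Corollary~\ref{Cor-Alternative to main theorem}) and the maximal-rank assumption at the critical point $\bar x$ (where $F_K^{\rm nat}(\bar x)\neq 0$). Everything else is a transcription of the proof of Theorem~\ref{Thm-main theorem-prime} with $F$ replaced by $F_K^{\rm nat}$, and the closed-range hypothesis used there is no longer needed because norm-coercivity supplies the boundedness of $\{x^k\}$ directly.
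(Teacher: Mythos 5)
Your proposal is correct and follows essentially the same route as the paper, which proves this corollary by repeating the argument of Theorem~\ref{Thm-main theorem-prime} with $F_K^{\rm nat}$ in place of $F$, the Clark Inverse Mapping Theorem in place of the classical one, and Theorem~\ref{Thm-auxiliary-natural map solution} to pass from a zero of the natural map to a solution of VI$(K,F)$. Your explicit check of the Lipschitz continuity of $F_K^{\rm nat}$ (needed for the generalized Jacobian and Clark's theorem) simply fills in a detail the paper delegates to the proof of Corollary~\ref{Cor-Alternative to main theorem}.
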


\begin{proof}
  Proof follows by following similar steps to those in the proof of Theorem~\ref{Thm-main theorem-prime}, where we consider $F_K^{\rm nat}(x)$ instead of $F(x)$, we employ the Clark Inverse Mapping theorem instead of the Inverse Mapping theorem, and use the connection between a zero of the natural mapping and a solution of the VI$(K,F)$ (Theorem~\ref{Thm-auxiliary-natural map solution}). 
  
\end{proof}

Generally, guaranteeing that $F_K^{\rm nat}$ is norm-coercive \an{might be involved depending on the mapping $F$ and the structure of the set $K$.}

Another way to study the existence of solutions is viable through the use of the normal map associated with the VI$(K,F)$, defined as follows:
\[F_K^{\rm nor}(v)=v-\Pi_K\left[v\right]+F(\Pi_K\left[v\right])
\qquad\hbox{for all $v\in\mathbb{R}^m$}.\]
\an{Unlike the natural mapping $F_K^{\rm nat}$, the normal mapping $F_K^{\rm nor}$ is well defined when the mapping $F$ is defined on the set $K$ instead of the entire space, i.e., $F:K\to\Rem$.}
The following is a known necessary and sufficient condition for the existence of a solution to a VI$(K,F)$ via the normal map $F_K^{\rm nor}$. 
\begin{thm}[Proposition 1.5.9 \cite{facchinei2003finite}]\label{Thm-auxiliary-normal map solution}
    Let $K\subseteq \mathbb{R}^m$ be a closed convex set and $F:K\to \mathbb{R}^m$ be a mapping. Then, we have $x^*\in{\rm SOL}(K,F)$ if and only if there is a vector $v\in\Rem$ such that $x^*=\Pi_K(v)$ and $F_K^{\rm nor}(v)=0$.
\end{thm}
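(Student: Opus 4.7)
The plan is to prove both directions by exhibiting a single natural candidate vector, namely $v = x^* - F(x^*)$, and reducing everything to the well-known projection-fixed-point characterization of $\mathrm{SOL}(K,F)$: a point $x^*\in K$ belongs to $\mathrm{SOL}(K,F)$ if and only if $x^* = \Pi_K[x^* - F(x^*)]$. This equivalence is the standard consequence of the variational characterization of the Euclidean projection onto a closed convex set and I would cite it (or quickly recall its two-line proof from the obtuse-angle property of $\Pi_K$) as the single fact that drives the argument.

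For the forward direction, assume $x^*\in\mathrm{SOL}(K,F)$ and set $v := x^* - F(x^*)$. The projection-fixed-point characterization gives $\Pi_K(v) = \Pi_K[x^* - F(x^*)] = x^*$, which is the first required property. Substituting into the definition of the normal map yields
\[
F_K^{\mathrm{nor}}(v) \;=\; v - \Pi_K[v] + F(\Pi_K[v]) \;=\; (x^* - F(x^*)) - x^* + F(x^*) \;=\; 0,
\]
which is the second required property.

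For the converse, suppose there exists $v\in\mathbb{R}^m$ with $x^* = \Pi_K(v)$ and $F_K^{\mathrm{nor}}(v)=0$. The vanishing of the normal map gives $v = \Pi_K[v] - F(\Pi_K[v]) = x^* - F(x^*)$, so applying $\Pi_K$ to both sides and using the assumption $x^* = \Pi_K(v)$ yields $x^* = \Pi_K[x^* - F(x^*)]$. Invoking the projection-fixed-point characterization once more shows $x^*\in\mathrm{SOL}(K,F)$, since $x^*\in K$ is automatic from $x^* = \Pi_K(v)\in K$.

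There is really no hard step here; the only subtlety worth flagging is that the statement does not assert that $v$ is unique (indeed any $v$ satisfying $\Pi_K(v)=x^*$ and the normal-map equation must equal $x^*-F(x^*)$ by the computation above, so the witness is in fact unique and equals $x^*-F(x^*)$), and that the direction $x^* = \Pi_K[x^*-F(x^*)] \Rightarrow x^*\in\mathrm{SOL}(K,F)$ requires $F$ to be defined at $x^*$, which is guaranteed since $x^*\in K$ and $F:K\to\mathbb{R}^m$. No differentiability, continuity, or monotonicity of $F$ is used, and the argument is purely algebraic given the projection characterization.
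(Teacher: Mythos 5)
Your proof is correct. The paper does not prove this statement at all---it is quoted verbatim as Proposition~1.5.9 of the cited reference---and your argument, which exhibits the single witness $v = x^* - F(x^*)$ and reduces both directions to the projection fixed-point characterization $x^*\in\mathrm{SOL}(K,F)\iff x^*=\Pi_K[x^*-F(x^*)]$ (equivalently, $F_K^{\rm nat}(x^*)=0$, the paper's Theorem on the natural map), is exactly the standard proof of that proposition; every step checks out, including your observations that the witness is unique and that only the definition of $F$ on $K$, not continuity or monotonicity, is needed.
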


The following lemma establishes some properties of the normal mapping $F_K^{\rm nor}$ associated with the VI$(K,F)$, when $F$ is norm-coercive.
\begin{lem}\label{Lem-Relation between mapping F and natural map}
   Let the set $K\subseteq \mathbb{R}^m$ be nonempty, closed, and convex. Let $F:K\to\mathbb{R}^m$ \an{be norm-coercive over the set $K$, i.e., $\lim_{\|x\|\to\infty, x\in K} \|F(x)\|=+\infty$.}
   %Moreover, let Assumption~\ref{Assum-Semi coercivity} hold.
   Then, 
   \an{for any sequence $\{x^k\}\subset\Rem$ with $\lim_{k\to\infty}\|x^k\|=+\infty$, 
   either
   $\lim_{k\to\infty}\|F_K^{\rm nor}(x^k)\|= \infty$ or there exists a subsequence 
   $\{x^{k_i}\}\subset\{x^k\}$ such that} 
\[\lim_{i\to \infty}\frac{\langle x^{k_i}-\Pi_K[x^{k_i}],F(\Pi_K[x^{k_i}]) \rangle}{\|x^{k_i}-\Pi_K[x^{k_i}]\|\,\|F(\Pi_K[x^{k_i}])\|}=-1.\]
\end{lem}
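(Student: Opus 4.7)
\medskip
\noindent\textbf{Proof plan.} I would begin by introducing the shorthand $y^k=\Pi_K[x^k]$ and $z^k=x^k-\Pi_K[x^k]$, so that $F_K^{\rm nor}(x^k)=z^k+F(y^k)$. The goal is then to analyze what happens along a subsequence on which $\{F_K^{\rm nor}(x^k)\}$ fails to diverge to $\infty$: by passing to such a subsequence (and reindexing), assume $\|z^k+F(y^k)\|\le M$ for some constant $M>0$, while still $\|x^k\|\to\infty$.

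The next step is a dichotomy on $\{y^k\}$. If $\{y^k\}$ has a bounded subsequence, then since $F$ is (Lipschitz) continuous on $K$ it is bounded on the closure of that bounded set, so $\{F(y^k)\}$ is bounded on that subsequence; the bound on $\|z^k+F(y^k)\|$ then forces $\{z^k\}$ bounded too, and hence $\{x^k\}=\{y^k+z^k\}$ bounded, contradicting $\|x^k\|\to\infty$. Therefore $\|y^k\|\to\infty$ along the whole subsequence, and the norm-coercivity of $F$ over $K$ gives $\|F(y^k)\|\to\infty$. Combined with $\|z^k+F(y^k)\|\le M$, this also forces $\|z^k\|\to\infty$, so both denominators in the cosine expression are eventually positive and the quantity is well defined.

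The final step is the cosine computation. From $\|z^k+F(y^k)\|^2=\|z^k\|^2+2\langle z^k,F(y^k)\rangle+\|F(y^k)\|^2\le M^2$, dividing by $2\|z^k\|\|F(y^k)\|$ gives
\[
\frac{\langle z^k,F(y^k)\rangle}{\|z^k\|\,\|F(y^k)\|}\le \frac{M^2}{2\|z^k\|\,\|F(y^k)\|}-\frac{1}{2}\!\left(\frac{\|z^k\|}{\|F(y^k)\|}+\frac{\|F(y^k)\|}{\|z^k\|}\right).
\]
The reverse triangle inequality yields $\bigl|\|z^k\|-\|F(y^k)\|\bigr|\le M$, so since both norms tend to $\infty$ their ratio tends to $1$, making the parenthesized term tend to $2$ and the first term tend to $0$. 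Thus the right-hand side tends to $-1$, and combined with the Cauchy--Schwarz lower bound of $-1$, the cosine tends to $-1$, as required.

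The only subtle point I anticipate is Case~1 of the dichotomy, which implicitly uses continuity of $F$ to bound $\{F(y^k)\}$ on bounded subsets of $K$; this is consistent with the Lipschitz continuity assumed throughout the subsection, so I would make that continuity use explicit. The rest is elementary, relying on norm-coercivity to drive $\|F(y^k)\|\to\infty$ and on a boundedness-versus-reverse-triangle-inequality argument to pin down the limiting cosine.
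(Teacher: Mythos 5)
Your proof is correct, and it shares the algebraic core of the paper's argument (expanding $\|F_K^{\rm nor}(x^k)\|^2=\|z^k\|^2+2\langle z^k,F(y^k)\rangle+\|F(y^k)\|^2$ and splitting on whether the projections $y^k=\Pi_K[x^k]$ stay bounded), but you finish it differently and, in my view, more cleanly. The paper argues forward: in the bounded-projection case it divides by $\|z^k\|^2$ to get divergence of the normal map, and in the unbounded case it rewrites the square as $\bigl(\|z^k\|+\|F(y^k)\|\cos\theta^k\bigr)^2+(1-\cos^2\theta^k)\|F(y^k)\|^2$ and runs subcases on $\liminf(1-\cos^2\theta^k)$ and on subsequences with $\cos\theta^k\to\pm1$. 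You instead take the contrapositive: assume $\|F_K^{\rm nor}(x^k)\|\le M$ along a subsequence, rule out bounded $y^k$ by contradiction, use norm-coercivity to force $\|F(y^k)\|\to\infty$ and hence $\|z^k\|\to\infty$, and then the reverse triangle inequality $\bigl|\|z^k\|-\|F(y^k)\|\bigr|\le M$ squeezes the cosine to $-1$ directly; this avoids the paper's case analysis on the behavior of $\cos\theta^k$ and is quantitatively sharper (it shows the two norms diverge at the same rate). One caveat you correctly flag: your bounded-$y^k$ case uses that $F$ is bounded on bounded subsets of $K$, which is not among the lemma's stated hypotheses (only norm-coercivity is assumed); note, however, that the paper's own proof tacitly uses the same local boundedness when it claims $\|F_K^{\rm nor}(x^k)\|^2/\|z^k\|^2\to1$ in the bounded-projection case, and in all applications of the lemma (e.g., Corollary~\ref{Cor-Alternative to main theorem 2-prime}) $F$ is Lipschitz continuous on $K$, so the extra assumption is harmless there — but it would be worth stating explicitly in the lemma if one wanted a self-contained statement.
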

\begin{proof}
For any sequence $\{x^k\}\subset \mathbb{R}^m$, and for all $k$,
\begin{align}\label{eq-Lem relation 2}
   \|F_K^{\rm nor}(x^k)\|^2
   =&\|x^k-\Pi_K[x^k]\|^2+\|F(\Pi_K[x^k])\|^2+2\langle x^k-\Pi_K[x^k], F(\Pi_K[x^k])\rangle\cr
=&\|x^k-\Pi_K[x^k]\|^2+\|F(\Pi_K[x^k])\|^2+2\|x^k-\Pi_K[x^k]\|\, \|F(\Pi_K[x^k])\rangle\| 
   \cos \theta^k, 
\end{align}
where $\theta^k$ is the angle between $x^k-\Pi_K[x^k]$ and $F(\Pi_K[x^k])$.
Let $\mathcal{K}$ be the index set such that $x^k\notin K$ for all $k\in\mathcal{K}$. Note that $F_K^{\rm nor}(x)=F(x)$ for all $x$. Thus, if the subsequence $\{x_k\mid k\not \in\mathcal{K}\}$ (which is contained in the set $K$) is infinite, then by the norm-coercivity of $F$ over the set $K$, we have $\lim_{k\to\infty, k\notin\mathcal{K}}\|F_K^{\rm nor}(x^k)\|=+\infty.$
Consider now the subsequence $\{x_k\mid k\in\mathcal{K}\}$. If this sequence is finite, then we are done. So assume that $\{x_k\mid k\in\mathcal{K}\}$ is infinite. \an{Depending whether $\{\|\Pi_K[x^k]\|\mid k\in\mathcal{K}\}$ is bounded or not, we consider two cases, respectively:}\\
\noindent
{\it Case $\{\|\Pi_K[x^k]\|\mid k\in\mathcal{K}\}$ is bounded}:  Dividing relation~\eqref{eq-Lem relation 2} by $\|x^k-\Pi_K[x^k]\|^2\ne0$ for $k\in\mathcal{K}$, 
we have 
\[\lim_{k\to \infty, k\in\mathcal{K}}\frac{\|F_K^{\rm nor}(x^k)\|^2}{\|x^k-\Pi_K[x^k]\|^2}=1.\] 
Since $\|x^k\|\to+\infty$ and $\{\|\Pi_K[x^k]\|\mid k\in\mathcal{K}\}$  is bounded, it follows that 
\[\lim_{k\to \infty,k\in\mathcal{K}}\|F_K^{\rm nor}(x^k)\|=\infty.\]

\noindent
{\it Case $\{\|\Pi_K[x^k]\|\mid k\in\mathcal{K}\}$ is unbounded}:  By relation~\eqref{eq-Lem relation 2}, we have for all $k\in\mathcal{K}$,
\begin{align}\label{eq-Lem relation 3}
   \|F_K^{\rm nor}(x^k)\|^2=&\left(\|x^k-\Pi_K[x^k]\|+\|F(\Pi_K[x^k])\| \cos \theta^k\right)^2+(1-\cos^2 \theta^k)\|F(\Pi_K[x^k])\|^2. 
\end{align} 
Hence,
\begin{align*}
\liminf_{k\to\infty\atop k\in \mathcal{K}}\|F_K^{\rm nor}(x^k)\|^2
   \ge 
   \liminf_{k\to\infty
   \atop k\in \mathcal{K}}
   (1-\cos^2 \theta^k)\|F(\Pi_K[x^k])\|^2.
   \end{align*}

If $\liminf_{k\to\infty, k\in \mathcal{K}}
   (1-\cos^2 \theta^k)\ne 0$, since $\{\|\Pi_K[x^k]\|\mid k\in\mathcal{K}\}$ is unbounded, by the norm-coercivity of $F$ over the set $K$, it follows that 
   \[\liminf_{k\to\infty\atop k\in \mathcal{K}}\|F_K^{\rm nor}(x^k)\|^2
   \ge +\infty.\]
If there is a subsequence $\{x^{k_j}\mid k_j\in\mathcal{K}\}$ along which $\cos \theta^{k_j}$ tends to 1, then 
the second term in \eqref{eq-Lem relation 3} disappears, but the first term tends to $+\infty$. 
Finally, if there is a subsequence $\{x^{k_i}\mid k_i\in\mathcal{K}\}$
along which $\cos \theta^{k_i}$ tends to -1, 
then we have a subsequence $\{x^{k_i}\}$ such that 
    \[\lim_{i\to \infty}\frac{\langle x^{k_i}-\Pi_K[x^{k_i}],F(\Pi_K[x^{k_i}]) \rangle}{\|x^{k_i}-\Pi_K[x^{k_i}]\|\|F(\Pi_K[x^{k_i}])\|}=-1,\] 
    which completes the proof.

\end{proof}

The following result is a consequence of Lemma~\ref{Lem-Relation between mapping F and natural map}.
\begin{cor}\label{Cor-Alternative to main theorem 2-prime}
   \sa{Let the set $K$ be closed convex, and let $F$ be Lipschitz continuous on $K$ and norm-coercive over the set $K$. Suppose there is no sequence $\{x^k\}$ with $\lim_{k\to \infty}\|x^k\|=\infty$ such that
   \begin{equation} \label{eq-Assumption-cor}
       \lim_{k\to \infty}\frac{\langle x^k-\Pi_K[x^k],F(\Pi_K[x^k]) \rangle}{\|x^k-\Pi_K[x^k]\|\|F(\Pi_K[x^k])\|}=-1.
   \end{equation}
   Moreover, let generalized Jacobian $\partial F_K^{\rm nor}(x)$ be of maximal rank for every $x\in \mathbb{R}^m$ where $F_K^{\rm nor}(x)\neq 0$. Then VI$(K,F)$ has a solution.}
\end{cor}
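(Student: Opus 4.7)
The plan is to mirror the strategy of Theorem~\ref{Thm-main theorem-prime} (and its constrained analog Corollary~\ref{Cor-Alternative to main theorem 2}) but applied to the normal mapping $F_K^{\rm nor}$ instead of the natural mapping. By Theorem~\ref{Thm-auxiliary-normal map solution}, producing any $\bar v\in\mathbb{R}^m$ with $F_K^{\rm nor}(\bar v)=0$ yields a solution $x^*=\Pi_K(\bar v)$ of VI$(K,F)$, so the whole task reduces to showing that $F_K^{\rm nor}$ attains the value zero.

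The first step, and the one that genuinely uses the hypothesis, is to upgrade the geometric assumption into norm-coercivity of $F_K^{\rm nor}$. Fix any sequence $\{x^k\}$ with $\|x^k\|\to\infty$. Lemma~\ref{Lem-Relation between mapping F and natural map} leaves only two possibilities: either $\|F_K^{\rm nor}(x^k)\|\to\infty$, or there exists a subsequence along which the cosine between $x^k-\Pi_K[x^k]$ and $F(\Pi_K[x^k])$ tends to $-1$. The second possibility would itself constitute a sequence satisfying~\eqref{eq-Assumption-cor}, which is precluded by assumption. Hence $\|F_K^{\rm nor}(x^k)\|\to\infty$ for every such sequence, i.e., $F_K^{\rm nor}$ is norm-coercive on $\mathbb{R}^m$.

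Next, I would verify that $F_K^{\rm nor}$ is Lipschitz continuous on $\mathbb{R}^m$ by the triangle inequality, the non-expansiveness of $\Pi_K$, and the Lipschitz continuity of $F$ on $K$, essentially reusing the one-line estimate from the proof of Corollary~\ref{Cor-Alternative to main theorem}. By Theorem~\ref{Thm-Proposition 1 of Clark paper}, the generalized Jacobian $\partial F_K^{\rm nor}(x)$ is then nonempty, compact, and convex at every point, so the maximal rank hypothesis is meaningful. The endgame follows the blueprint of Theorem~\ref{Thm-main theorem-prime}: set $b=\inf_{x\in\mathbb{R}^m}\|F_K^{\rm nor}(x)\|$ and take a minimizing sequence $\{x^k\}$; norm-coercivity forces $\{x^k\}$ to be bounded, so along a subsequence $x^k\to\bar x$ with $\|F_K^{\rm nor}(\bar x)\|=b$ by continuity. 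Assuming $b>0$ for contradiction gives $F_K^{\rm nor}(\bar x)\neq 0$, whence the maximal rank assumption together with Theorem~\ref{Thm-Clark nverse mapping theorem} supplies a local Lipschitz inverse $G$ of $F_K^{\rm nor}$ near $\bar x$; pulling back $(1-\alpha)F_K^{\rm nor}(\bar x)$ for sufficiently small $\alpha\in(0,1)$ yields $z=G((1-\alpha)F_K^{\rm nor}(\bar x))$ with $\|F_K^{\rm nor}(z)\|=(1-\alpha)b<b$, contradicting the definition of $b$. Thus $b=0$, and Theorem~\ref{Thm-auxiliary-normal map solution} delivers the desired solution.

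The main obstacle, and essentially the only novel piece of work, is the first step: converting the cosine-based nondegeneracy condition~\eqref{eq-Assumption-cor} into honest norm-coercivity of the normal mapping. Once that translation is secured through Lemma~\ref{Lem-Relation between mapping F and natural map}, the rest of the proof is a direct transfer of templates already established in the paper, with $F$ replaced by $F_K^{\rm nor}$ and the Inverse Mapping Theorem replaced by its Clark counterpart.
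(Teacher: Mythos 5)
Your proposal is correct and follows essentially the same route as the paper: invoke Lemma~\ref{Lem-Relation between mapping F and natural map} together with the excluded-cosine condition to obtain norm-coercivity of $F_K^{\rm nor}$, then repeat the infimum/Clark-inverse contradiction argument of Corollary~\ref{Cor-Alternative to main theorem 2} with $F_K^{\rm nor}$ in place of $F_K^{\rm nat}$, and conclude via Theorem~\ref{Thm-auxiliary-normal map solution}. Your write-up is in fact more explicit than the paper's sketch (e.g., verifying the Lipschitz continuity of $F_K^{\rm nor}$ so that the generalized Jacobian is well defined), but it is the same argument.
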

\begin{proof}
    The norm-coercivity of $F_K^{nor}$ is guaranteed by Lemma~\ref{Lem-Relation between mapping F and natural map}. The rest of the proof follows from Theorem~\ref{Thm-auxiliary-normal map solution}, and the proof line is similar to that of Corollary~\ref{Cor-Alternative to main theorem 2}, where we use $F_K^{\rm nor}(x)$ instead of $F_K^{\rm nat}(x)$.
    
\end{proof}

The non-existence of a sequence $\{x^k\}$ with $\lim_{k\to \infty}\|x^k\|=\infty$ and satisfying~\eqref{eq-Assumption-cor} is, also, sufficient for the norm-coercivity of $F_K^{\rm nat}$. In the sequel, we will illustrate this connection. 
\begin{lem}
Suppose that $K\subseteq \mathbb{R}^m$ is closed convex and $F:K\to \mathbb{R}^m$ is Lipschitz continuous. Moreover, assume that the normal mapping $F_K^{\rm nor}$ is norm-coercive on $\mathbb{R}^m$. Then, the natural mapping $F_K^{\rm nat}$ is norm-coercive on $K$.
\end{lem}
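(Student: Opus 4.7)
The plan is to establish a direct quantitative inequality linking $F_K^{\rm nor}$ and $F_K^{\rm nat}$, and then deduce norm-coercivity of the natural map on $K$ by contradiction. First, I would derive the following algebraic identity. For any $x\in K$, setting $v=x-F(x)$, the definition of $F_K^{\rm nat}$ gives $\Pi_K[v]=x-F_K^{\rm nat}(x)$, and substituting this into the definition of $F_K^{\rm nor}$ yields
\[F_K^{\rm nor}(x-F(x))=F_K^{\rm nat}(x)-F(x)+F(x-F_K^{\rm nat}(x)).\]
Both $x$ and $x-F_K^{\rm nat}(x)=\Pi_K[v]$ lie in $K$, so the Lipschitz continuity of $F$ on $K$ can be applied to give $\|F(x)-F(x-F_K^{\rm nat}(x))\|\le L\|F_K^{\rm nat}(x)\|$. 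Combining this with the triangle inequality produces the key bound
\[\|F_K^{\rm nor}(x-F(x))\|\le (1+L)\,\|F_K^{\rm nat}(x)\|\qquad\hbox{for all $x\in K$.}\]

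Next, I would argue by contradiction. Suppose $F_K^{\rm nat}$ is not norm-coercive on $K$: then there exists a sequence $\{x^k\}\subset K$ with $\|x^k\|\to\infty$ for which, passing to a subsequence, $\|F_K^{\rm nat}(x^k)\|\le M$ for some constant $M$. Setting $v^k=x^k-F(x^k)$, the bound above gives $\|F_K^{\rm nor}(v^k)\|\le (1+L)M$, so by norm-coercivity of $F_K^{\rm nor}$ on $\mathbb{R}^m$, the sequence $\{v^k\}$ must be bounded. Non-expansiveness of the projection then gives $\|\Pi_K[v^k]\|\le\|\Pi_K[0]\|+\|v^k\|$, so $\{\Pi_K[v^k]\}$ is bounded as well. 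Finally, the identity $x^k=F_K^{\rm nat}(x^k)+\Pi_K[v^k]$ forces $\|x^k\|\le M+\|\Pi_K[v^k]\|$ to remain bounded, contradicting $\|x^k\|\to\infty$.

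The only substantive step is the identity/inequality connecting the two mappings; once that is in place, everything else is bookkeeping built on the non-expansiveness of $\Pi_K$ and the hypothesis on $F_K^{\rm nor}$. The main point to verify carefully is that $x-F_K^{\rm nat}(x)\in K$, which is precisely what licenses the use of the Lipschitz bound on $F$ (whose domain is only $K$). No continuity of $F_K^{\rm nor}$ or $F_K^{\rm nat}$ beyond what is already built into their defining formulas is required.
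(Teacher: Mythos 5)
Your proposal is correct and follows essentially the same route as the paper: both rest on the observation that for $x\in K$ and $v=x-F(x)$ one has $\Pi_K[v]=x-F_K^{\rm nat}(x)\in K$, so Lipschitz continuity of $F$ on $K$ gives $\|F_K^{\rm nor}(x-F(x))\|\le(1+L)\|F_K^{\rm nat}(x)\|$, and both then combine this with the identity $x=F_K^{\rm nat}(x)+\Pi_K[v]$ and non-expansiveness of $\Pi_K$ in a contradiction argument. The only difference is cosmetic ordering: you invoke coercivity of $F_K^{\rm nor}$ to force $\{v^k\}$ bounded and then contradict $\|x^k\|\to\infty$, while the paper first rules out boundedness of $\{v^k\}$ and then contradicts coercivity of $F_K^{\rm nor}$.
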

\begin{proof}
The proof is by contradiction. Let us assume there is a sequence $\{x^k\}\subset K$ such that $\lim_{k\to \infty}\|x^k\|=\infty$ and there is an $M>0$ such that $\|F_K^{\rm nat}(x^k)\|\leq M$ for every $k\geq 0$. By the projection non-expansiveness property, $x_k\in K$ for all $k$, and the definition of $F_K^{\rm nat}$, it follows that 
    \begin{equation} \label{eq-coercivity-natural map-auxiliary}
        \|x^k-\Pi_K[x^k-F(x^k)]\|\leq M\qquad\hbox{for every $k$}.
    \end{equation}
Consider the sequence $\{v^k\}$ with $v^k=x^k-F(x^k)$ for all $k$. If $\{v_k\}$ is bounded, then the sequence $\{\Pi_k[v^k]\}$ is also bounded. By~\eqref{eq-coercivity-natural map-auxiliary}, we get
$\|x^k\|\le \|x^k-\Pi_K[v^k]\|+\|\Pi_K[v^k]\|,$
which implies that $\{x^k\}$ is also bounded -- a contradiction. Thus, $\{v^k\}$ must be unbounded.

By the Lipschitz continuity of $F$ over $K$, from relation~\eqref{eq-coercivity-natural map-auxiliary}, it follows that  
        \[\|F(x^k)-F(\Pi_K[x^k-F(x^k)])\|\leq M'\qquad\hbox{for all $k$}.\]
    where $M'=LM$ with $L$ being the Lipschitz continuity constant of $F$.
    Using the preceding relation and~\eqref{eq-coercivity-natural map-auxiliary}, and the norm-triangle inequality, we obtain for every $k$,
\begin{align*}
   &\|x^k-F(x^k)-\Pi_K[x^k-F(x^k)]+F(\Pi_K[x^k-F(x^k)])\|\leq M+M'.
\end{align*}
    Since $v^k=x^k-F(x^k)$, it follows that
    for all $k$,
    \[\|v^k-\Pi_K[v^k]+F(\Pi_K[v^k])\| 
    \leq M+M',\]
    implying by the definition of the normal mapping that 
    $F_K^{\rm nor}(v^k)\le M+M'$ for all $k$, which contradicts the norm-coercivity of $F_K^{\rm nor}$.

\end{proof}

%---------------------------------
\subsubsection{Degree Theoretic Approach}
%---------------------------------
Here, we introduce degree theoretic notions that we use to study the existence of a solution to a VI. We show the existence of solutions based on the nearness of the VI mapping to another mapping for which a solution exists. We denote tha closure and boundary of a set by $\hbox{cl}(\cdot)$ and $\hbox{bd}(\cdot)$, respectively. 
Let $\Gamma$ be the collection of triples $(\varphi,\Omega,p)$, where $\varphi:\hbox{cl}(\Omega)\to \mathbb{R}^m$ is a continuous mapping, 
$\Omega\subset\mathbb{R}^m$ is a bounded open subset, and $p$ is a vector satisfying $p \notin \varphi\left(\hbox{bd}(\Omega)\right)$. For the collection $\Gamma$, we consider an integer-valued  function
$\hbox{deg}(\varphi,\Omega,p)$, i.e., $\hbox{deg}: \Gamma\to\mathbb{N}$, for which we have the following definition. 
 \begin{defn}[Definition 2.1.1 \cite{facchinei2003finite}] \label{Def-Degree}
     %Let an integer $\hbox{deg}(\varphi,\Omega,p)$ be associated with each triple $(\varphi,\Omega,p)$ in the collection $\Gamma$. 
     The function $\hbox{deg}$ is a (topological) degree if the following three axioms are satisfied:\\
 \noindent
 (Axiom 1) $\hbox{deg}(\mathbb{I},\Omega,p)=1$ if $p\in \Omega$.\\
 \noindent
 (Axiom 2) $\hbox{deg}(\varphi,\Omega,p)=\hbox{deg}(\varphi,\Omega_1,p)+\hbox{deg}(\varphi,\Omega_2,p)$ if $\Omega_1$ and $\Omega_2$ are two disjoint open subsets of $\Omega$ and $p\notin \varphi\left(\hbox{cl}(\Omega)\setminus(\Omega_1 \cup \Omega_2)\right)$.\\
\noindent (Axiom 3) $\hbox{deg}\left(H(\cdot,t),\Omega,p(t)\right)$ is independent of $t \in [0,1]$ for any two continuous functions $H : \hbox{cl}(\Omega)\times [0,1] \to \mathbb{R}^m$ and $p : [0,1] \to \mathbb{R}^m$ such that
         $$p(t)\notin H(\hbox{bd}(\Omega),t)\qquad\hbox{for all } t\in [0,1].$$

      We say that $\hbox{deg}(\varphi,\Omega,p)$ is the degree of $\varphi$ at the point $p$ relative to $\Omega$. If $p = 0$, we simply write $\hbox{deg}(\varphi,\Omega)$. 
      %for $\hbox{deg}(\varphi,\Omega,0)$.
 \end{defn}
 The following lemma provides the fundamental property of the degree function, which plays a key role in many existing results on solutions of variational inequalities.
 \begin{lem}[Theorem 2.1.2 \cite{facchinei2003finite}]\label{Lem-Degree Theorem}
 %Let $\Omega\subset\mathbb{R}^m$ be a nonempty bounded open set,
 %let $\varphi: \hbox{cl}(\Omega) \to \mathbb{R}^m$ be a continuous mapping, and  let $p \notin \varphi\left(\hbox{bd}(\Omega)\right)$.
 Let $(\varphi,\Omega,p)\in\Gamma$.
 If $\hbox{deg}(\varphi,\Omega,p)\neq 0$, then 
 $\varphi(\bar{x})=p$ for 
 some $\bar{x} \in \Omega$. Conversely, if $p \notin \hbox{cl}\left(\varphi(\Omega)\right)$, then $\hbox{deg}(\varphi,\Omega,p)= 0$.
 \end{lem}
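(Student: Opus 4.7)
The plan is to derive both statements from the additivity axiom (Axiom~2) by choosing the trivial partition $\Omega_1 = \Omega_2 = \emptyset$; neither Axiom~1 nor Axiom~3 will be needed. The crucial auxiliary identity I would establish first is that $\hbox{deg}(\varphi, \emptyset, p) = 0$ for every admissible triple with empty domain. To see this, observe that $(\varphi, \emptyset, p) \in \Gamma$ vacuously since $\hbox{bd}(\emptyset) = \emptyset$, and applying Axiom~2 with $\Omega = \Omega_1 = \Omega_2 = \emptyset$ -- whose precondition $p \notin \varphi(\hbox{cl}(\emptyset) \setminus \emptyset) = \varphi(\emptyset) = \emptyset$ holds vacuously -- yields $\hbox{deg}(\varphi, \emptyset, p) = 2\,\hbox{deg}(\varphi, \emptyset, p)$, forcing the degree to vanish.

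For the first statement I would argue by contraposition. Suppose no $\bar{x} \in \Omega$ satisfies $\varphi(\bar{x}) = p$, i.e., $p \notin \varphi(\Omega)$. Combining this with the condition $p \notin \varphi(\hbox{bd}(\Omega))$, which is built into the assumption $(\varphi, \Omega, p) \in \Gamma$, we obtain $p \notin \varphi(\Omega) \cup \varphi(\hbox{bd}(\Omega)) = \varphi(\hbox{cl}(\Omega))$. Consequently Axiom~2 with the partition $\Omega_1 = \Omega_2 = \emptyset$ is admissible, since its precondition reduces to $p \notin \varphi(\hbox{cl}(\Omega))$, and it delivers $\hbox{deg}(\varphi, \Omega, p) = 2\,\hbox{deg}(\varphi, \emptyset, p) = 0$, contradicting the hypothesis $\hbox{deg}(\varphi, \Omega, p) \neq 0$.

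The converse requires essentially no additional work. The hypothesis $p \notin \hbox{cl}(\varphi(\Omega))$ implies \emph{a fortiori} $p \notin \varphi(\Omega)$, and together with the built-in admissibility condition $p \notin \varphi(\hbox{bd}(\Omega))$ the identical Axiom~2 computation once again forces $\hbox{deg}(\varphi, \Omega, p) = 0$. I expect the main obstacle to be purely bookkeeping: one must verify carefully that $(\varphi, \emptyset, p)$ always lies in $\Gamma$ and that the admissibility condition of Axiom~2 reduces, in each application, to the disjointness of $p$ from $\varphi(\hbox{cl}(\Omega))$, which is exactly what the hypotheses supply. Beyond these routine checks, the result is a two-line algebraic consequence of additivity.
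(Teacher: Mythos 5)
Your proof is correct. The paper itself offers no argument for this lemma --- it is quoted directly from Theorem~2.1.2 of Facchinei--Pang --- so there is no internal proof to compare against; your derivation from the axioms of Definition~\ref{Def-Degree} is the standard one (it is essentially how the consequences of the additivity axiom are obtained in the degree-theory literature), and all the steps check out: $(\varphi,\emptyset,p)\in\Gamma$ vacuously, $\hbox{deg}(\varphi,\emptyset,p)=0$ follows from additivity, and $p\notin\varphi(\Omega)$ together with the admissibility condition $p\notin\varphi(\hbox{bd}(\Omega))$ gives $p\notin\varphi(\hbox{cl}(\Omega))$ since $\hbox{cl}(\Omega)=\Omega\cup\hbox{bd}(\Omega)$ for open $\Omega$, so the hypothesis of Axiom~2 with $\Omega_1=\Omega_2=\emptyset$ is met and the degree vanishes; the ``converse'' part is indeed an \emph{a fortiori} consequence of the contrapositive you prove. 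The only point a pedant could press is the reading of ``two disjoint open subsets'' as permitting $\Omega_1=\Omega_2=\emptyset$; if you want to avoid that, note that taking $\Omega_1=\Omega$, $\Omega_2=\emptyset$ (admissible because $p\notin\varphi(\hbox{bd}(\Omega))$) already yields $\hbox{deg}(\varphi,\emptyset,p)=0$ with distinct sets, and one can then reach the conclusion using only pairs of distinct subsets, so the argument does not hinge on that reading.
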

 
 It is known that, when two mappings are close enough, they have the same degree, as seen in the next lemma.
\begin{lem}[Proposition 2.1.3 c) \cite{facchinei2003finite}]\label{Lem-Degree Nearness} 
%Let $\Omega$ be a nonempty, bounded open subset of $\mathbb{R}^m$, and let $\varphi$ and $\psi$ be continuous mappings from $\hbox{cl}(\Omega)$ to $\mathbb{R}^m$. Assume that $p \notin \varphi\left(\hbox{bd}(\Omega)\right)$. 
Let $(\varphi,\Omega,p)\in\Gamma$, and let $\psi:\hbox{cl}(\Omega)\to\mathbb{R}^m$ be a continuous mapping.
Then, $\hbox{deg}(\varphi,\Omega,p)=\hbox{deg}(\psi,\Omega,p)$ if 
$$ \max_{x\in \hbox{cl}(\Omega) }\|\varphi(x)-\psi(x)\|_{\infty}<\inf_{z\in \varphi(\hbox{bd}(\Omega))}\|p-z\|_\infty.$$
%$$ \max_{x\in \hbox{cl}(\Omega) }\|\varphi(x)-\psi(x)\|_{\infty}<\hbox{dist}_{\infty}\left(p,\varphi\left(\hbox{bd}(\Omega)\right)\right),$$
%where, $\hbox{dist}_{\infty}(x,S):=\inf_{y\in S}\|x-y\|_{\infty}$. 
Moreover, we have
$$ \max_{x\in \hbox{cl}(\Omega) }\|\varphi(x)-\psi(x)\|<\frac{1}{7}\inf_{z\in \varphi(\hbox{bd}(\Omega))}\|p-z\|.$$
%$$ \max_{x\in \hbox{cl}(\Omega) }\|\varphi(x)-\psi(x)\|<\frac{1}{7}\hbox{dist}\left(p,\varphi\left(\hbox{bd}(\Omega)\right)\right),$$
%where $\hbox{dist}(x,S):=\inf_{y\in S}\|x-y\|$. 
 \end{lem}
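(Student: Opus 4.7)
The lemma records two sufficient ``smallness'' conditions for the degree invariance $\mathrm{deg}(\varphi,\Omega,p)=\mathrm{deg}(\psi,\Omega,p)$, the first in the $\ell_\infty$ norm and the second in the Euclidean norm. The first display is exactly Proposition~2.1.3(c) of~\cite{facchinei2003finite}, so my plan is to invoke that citation directly for the first inequality and to derive the Euclidean-norm inequality from it by a short norm-equivalence argument.

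For the second display, I would start from the standard equivalence $\|v\|_\infty \le \|v\| \le \sqrt{m}\,\|v\|_\infty$, valid for every $v\in\mathbb{R}^m$. Applying the left-hand bound pointwise in $x$ with $v=\varphi(x)-\psi(x)$ and the right-hand bound pointwise in $z$ with $v=p-z$, I obtain
\[\max_{x\in\mathrm{cl}(\Omega)}\|\varphi(x)-\psi(x)\|_\infty \le \max_{x\in\mathrm{cl}(\Omega)}\|\varphi(x)-\psi(x)\|,\]
\[\inf_{z\in\varphi(\mathrm{bd}(\Omega))}\|p-z\| \le \sqrt{m}\,\inf_{z\in\varphi(\mathrm{bd}(\Omega))}\|p-z\|_\infty.\]
Chaining these two estimates with the Euclidean hypothesis $\max_x\|\varphi-\psi\|<\tfrac{1}{7}\inf_z\|p-z\|$ would produce
\[\max_{x\in\mathrm{cl}(\Omega)}\|\varphi(x)-\psi(x)\|_\infty < \frac{\sqrt{m}}{7}\,\inf_{z\in\varphi(\mathrm{bd}(\Omega))}\|p-z\|_\infty,\]
which, whenever $\sqrt{m}/7\le 1$, is precisely the $\ell_\infty$ hypothesis of the first display, so the degree equality follows from the cited proposition.

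The step I expect to be the main obstacle is the book-keeping of the constant $\tfrac{1}{7}$: a pure norm-equivalence reduction naturally produces a dimension-dependent factor $\tfrac{1}{\sqrt{m}}$, so the stated constant should either be read as implicitly absorbing a $\sqrt{m}$ factor (valid for $m\le 49$) or as a convenient universal choice tailored to the fixed-dimension applications that follow in the degree-theoretic existence results of the paper. With that caveat, the core of the argument is the one-line reduction via $\|v\|_\infty \le \|v\| \le \sqrt{m}\,\|v\|_\infty$, after which the Facchinei--Pang invariance property does all the remaining work and no additional degree-theoretic machinery is needed beyond Axiom~3 of Definition~\ref{Def-Degree}.
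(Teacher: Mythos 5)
The paper offers no proof of this lemma at all—it is quoted from Proposition 2.1.3(c) of \cite{facchinei2003finite}—so the only question is whether your derivation of the Euclidean-norm condition from the $\ell_\infty$ one is sound, and in general it is not. Your chain of estimates yields $\max_{x\in\hbox{cl}(\Omega)}\|\varphi(x)-\psi(x)\|_\infty<\frac{\sqrt m}{7}\,\inf_{z\in\varphi(\hbox{bd}(\Omega))}\|p-z\|_\infty$, which implies the $\ell_\infty$ hypothesis only when $\sqrt m\le 7$. You flag this yourself, but the caveat cannot be absorbed the way you suggest: the lemma is invoked in Theorem~\ref{Thm-Degree theorem main conclusion} for arbitrary dimension $m$, so an argument valid only for $m\le 49$ leaves a genuine gap, and reading the constant $\tfrac17$ as ``implicitly absorbing a $\sqrt m$ factor'' would change the statement being proved rather than prove it.

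The missing idea is that no norm equivalence is needed: the nearness property of the degree holds in any fixed norm with constant $1$, by precisely the homotopy device you gesture at in your last sentence. Set $H(x,t):=(1-t)\varphi(x)+t\psi(x)$ and $p(t)\equiv p$. Since $\Omega$ is bounded, $\hbox{bd}(\Omega)$ is compact, hence $\varphi(\hbox{bd}(\Omega))$ is compact and $d:=\inf_{z\in\varphi(\hbox{bd}(\Omega))}\|p-z\|>0$ because $p\notin\varphi(\hbox{bd}(\Omega))$. For every $x\in\hbox{bd}(\Omega)$ and $t\in[0,1]$ we get $\|H(x,t)-p\|\ge\|\varphi(x)-p\|-t\,\|\varphi(x)-\psi(x)\|\ge d-\max_{y\in\hbox{cl}(\Omega)}\|\varphi(y)-\psi(y)\|>\bigl(1-\tfrac17\bigr)d>0$, so $p\notin H(\hbox{bd}(\Omega),t)$ for all $t\in[0,1]$; Axiom~3 of Definition~\ref{Def-Degree} then gives $\hbox{deg}(\varphi,\Omega,p)=\hbox{deg}(H(\cdot,1),\Omega,p)=\hbox{deg}(\psi,\Omega,p)$, and the estimate at $t=1$ shows $(\psi,\Omega,p)\in\Gamma$, so the right-hand degree is well defined. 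The same computation carried out in the $\ell_\infty$ norm re-proves the first display, and it makes clear that the factor $\tfrac17$ in the Euclidean version is merely a conservative, dimension-independent choice rather than an artifact of passing between norms; your reduction should be replaced by this direct homotopy estimate.
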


The next lemma provides a relation between the natural mapping $\varphi_K^{\rm nat}$ a given $\xi$-monotone mapping $\varphi$. 

\begin{lem}[Theorem 2.3.3 c) \cite{facchinei2003finite}]\label{Lem-property-xi monotone mapping}
    Let us assume the set $K$ is closed convex and $\varphi:K\to \mathbb{R}^m$ be defined, Lipschitz continuous, and $\xi$-monotone on $\mathbb{R}^m$ for some $\xi>1$. Then for every $x\in \mathbb{R}^m$ we have 
    $$\frac{c_\varphi}{L_\varphi}\|x-x^*\|^{\xi-1}\leq\|\varphi_K^{\rm nat}(x)\|,$$
    where, $x^*$ is the unique solution of VI$(K,\varphi)$ and $L_\varphi$ and $c_\varphi$ are the Lipshitz constant and ${\xi}$-monotonicity constant of mapping $\varphi$, respectively.
\end{lem}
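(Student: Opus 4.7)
The strategy is to combine the variational characterization of the Euclidean projection (which defines $\varphi_K^{\rm nat}$), the VI property at $x^*$, and the $\xi$-monotonicity of $\varphi$ to produce a scalar lower bound on $\|\varphi_K^{\rm nat}(x)\|$. A preliminary observation is that $\xi$-monotonicity with $\xi>1$ forces VI$(K,\varphi)$ to have at most one solution, so $x^*$ (assumed to exist) is unique, and by Theorem~\ref{Thm-auxiliary-natural map solution}, $\varphi_K^{\rm nat}(x^*)=0$. The case $x=x^*$ is then trivial, so assume $x\ne x^*$.

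Set $y:=\Pi_K[x-\varphi(x)]$, so that $\varphi_K^{\rm nat}(x)=x-y$, and write $n:=x-y$ and $u:=x-x^*$. The projection characterization $\langle x-\varphi(x)-y,\,z-y\rangle\le 0$ for all $z\in K$, applied with $z=x^*\in K$, yields $\langle n,\,x^*-y\rangle\le\langle \varphi(x),\,x^*-y\rangle$. Combining this with the VI condition at $x^*$ applied at $z=y\in K$, namely $\langle \varphi(x^*),\,x^*-y\rangle\le 0$, one obtains
\[\langle n,\,x^*-y\rangle\le\langle \varphi(x)-\varphi(x^*),\,x^*-y\rangle.\]
Substituting $x^*-y=-u+n$ and expanding both inner products rearranges the display into the scalar relation
\[\langle \varphi(x)-\varphi(x^*),\,u\rangle + \|n\|^{2} - \langle n,u\rangle \;\le\; \langle \varphi(x)-\varphi(x^*),\,n\rangle.\]

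The final step is to apply $\xi$-monotonicity on the left, $\langle \varphi(x)-\varphi(x^*),\,u\rangle\ge c_\varphi\|u\|^{\xi}$, together with Lipschitz continuity and Cauchy--Schwarz on the right, $\langle \varphi(x)-\varphi(x^*),\,n\rangle\le L_\varphi\|u\|\|n\|$, and then to divide by $\|u\|$ to extract the claimed bound.

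The principal obstacle is the cross term $\|n\|^{2}-\langle n,u\rangle = \langle n,\,y-x^*\rangle$: bounding it by naive Cauchy--Schwarz on $\langle n,u\rangle$ and dropping $\|n\|^{2}$ produces the weaker constant $c_\varphi/(L_\varphi+1)$ in place of the stated $c_\varphi/L_\varphi$. Achieving the sharp constant requires exploiting more carefully that $(x-\varphi(x))-y\in N_K(y)$: the projection inequality at $z=x^*$ actually gives $\langle n,\,y-x^*\rangle \ge \langle \varphi(x),\,y-x^*\rangle$, and feeding this back (together with $\langle \varphi(x^*),\,x^*-y\rangle\le 0$ used to swap $\varphi(x)$ for $\varphi(x)-\varphi(x^*)$) absorbs the $\|n\|^{2}$ term directly into the Lipschitz-controlled side, collapsing the inequality to $c_\varphi\|u\|^{\xi}\le L_\varphi\|u\|\|n\|$ and yielding the stated bound.
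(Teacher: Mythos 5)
Your setup and the first chain of inequalities are correct: with $y=\Pi_K[x-\varphi(x)]$, $n=\varphi_K^{\rm nat}(x)=x-y$, $u=x-x^*$, the projection inequality at $z=x^*$ and the VI inequality for $x^*$ at $z=y$ do combine into
\[
\langle \varphi(x)-\varphi(x^*),u\rangle+\|n\|^2-\langle n,u\rangle\;\le\;\langle\varphi(x)-\varphi(x^*),n\rangle ,
\]
and with $\xi$-monotonicity, Lipschitz continuity and Cauchy--Schwarz this gives $c_\varphi\|u\|^{\xi-1}\le(1+L_\varphi)\|n\|$. Note that the paper itself gives no proof of this lemma (it is quoted from Facchinei--Pang), and the bound your computation actually delivers, with constant $c_\varphi/(1+L_\varphi)$, is the version of the error bound that is true; up to that point your argument is complete and rigorous.

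The genuine gap is the last paragraph, where you claim to upgrade the constant to $c_\varphi/L_\varphi$. The quantity you need to absorb is $\|n\|^2-\langle n,u\rangle=-\langle n,\,y-x^*\rangle$, and the projection inequality at $z=x^*$ only gives $\langle\varphi(x),y-x^*\rangle\le\langle n,y-x^*\rangle$ --- i.e.\ an upper bound on $\langle\varphi(x),y-x^*\rangle$, not a sign for $\langle n,y-x^*\rangle$. Feeding it back as you describe merely reproduces the inequality displayed above; collapsing to $c_\varphi\|u\|^{\xi}\le L_\varphi\|u\|\,\|n\|$ would require $\langle n,\,y-x^*\rangle\le 0$, which does not hold in general. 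Moreover, no repair is possible, because the statement with $c_\varphi/L_\varphi$ is false as it stands: take $K=\mathbb{R}^m$ and $\varphi(x)=cx$ with $0<c<1$. Then $x^*=0$, $\varphi_K^{\rm nat}(x)=cx$, $c_\varphi=L_\varphi=c$, $\xi=2$, and the asserted inequality reads $\|x\|\le c\|x\|$, which fails for every $x\ne0$ (even though $x\in K$). So the sharpened constant you aim for cannot be reached; the correct conclusion of your argument --- and the form in which the lemma should be used downstream (it still yields the norm-coercivity of $\varphi_K^{\rm nat}$ needed in Theorem~\ref{Thm-Degree theorem main conclusion}) --- is $\frac{c_\varphi}{1+L_\varphi}\|x-x^*\|^{\xi-1}\le\|\varphi_K^{\rm nat}(x)\|$.
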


Using Lemmas~\ref{Lem-Degree Theorem}--\ref{Lem-property-xi monotone mapping}, we have 
 the following result for the existence of solutions to a VI.
 In what follows, we use notation $\hbox{dist}(x,S):=\inf_{y\in S}\|x-y\|.$
 \begin{thm}\label{Thm-Degree theorem main conclusion}
     Let $K\subseteq \mathbb{R}^m$ be a closed convex set and  $F:\Rem\to\mathbb{R}^m$ be a continuous mapping. Let the mapping $\varphi:\Rem\to\mathbb{R}^m$ be $\xi$-monotone with constant $c_\varphi$ and Lipschitz continuous with constant $L_\varphi$. Also, assume that for some $l\in\mathbb{R}^+$,
     we have $$\max_{x\in \mathbb{R}^m }\|\varphi_K^{\rm nat}(x)-F_K^{\rm nat}(x)\|<l.$$ Then, $\hbox{SOL}(K,F)$ is nonempty.
 \end{thm}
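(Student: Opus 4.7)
The plan is to use degree theory (Lemmas~\ref{Lem-Degree Theorem}--\ref{Lem-property-xi monotone mapping}) to transport the nonzero degree of the natural map of $\varphi$ to the natural map of $F$. Since $\varphi$ is Lipschitz and $\xi$-monotone with $\xi>1$, VI$(K,\varphi)$ has a unique solution $x^*\in K$, so $\varphi_K^{\rm nat}(x^*)=0$ by Theorem~\ref{Thm-auxiliary-natural map solution}, and Lemma~\ref{Lem-property-xi monotone mapping} supplies the lower bound $\|\varphi_K^{\rm nat}(x)\|\ge (c_\varphi/L_\varphi)\|x-x^*\|^{\xi-1}$ on all of $\Rem$.

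Fix $R>\max\{\|x^*\|,\,(7L_\varphi l/c_\varphi)^{1/(\xi-1)}\}$ and set $\Omega:=B_R(x^*)$. On $\hbox{bd}(\Omega)$ the lower bound gives $\|\varphi_K^{\rm nat}(x)\|>7l$, while the hypothesis gives $\|\varphi_K^{\rm nat}(x)-F_K^{\rm nat}(x)\|<l$ everywhere. The second (factor-$1/7$) inequality in Lemma~\ref{Lem-Degree Nearness}, with $p=0$, now yields
\[\hbox{deg}(F_K^{\rm nat},\Omega,0)=\hbox{deg}(\varphi_K^{\rm nat},\Omega,0).\]

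The main technical step is to show the right-hand side is nonzero. I would use the family $\varphi^t(x):=t\varphi(x)+(1-t)(x-x^*)$ for $t\in[0,1]$ and define $H(x,t):=(\varphi^t)_K^{\rm nat}(x)$; by non-expansiveness of $\Pi_K$, $H$ is continuous on $\hbox{cl}(\Omega)\times[0,1]$. Three observations justify the argument: (i)~$\varphi^t$ is Lipschitz with constant $L_t:=tL_\varphi+(1-t)$, is $2$-monotone with constant $1-t$, and is $\xi$-monotone with constant $tc_\varphi$; (ii)~testing VI$(K,\varphi^t)$ with $y=x^*$ and using the VI satisfied by $x^*$ for $\varphi$ forces $x^*$ to remain the unique solution of VI$(K,\varphi^t)$ for every $t\in[0,1]$; (iii)~applying Lemma~\ref{Lem-property-xi monotone mapping} to $\varphi^t$ --- via the $2$-monotonicity bound on $t\in[0,1/2]$ and the $\xi$-monotonicity bound on $t\in[1/2,1]$ --- produces a positive lower bound on $\|H(x,t)\|$ that is uniform in $t\in[0,1]$ whenever $\|x-x^*\|=R$. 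Axiom~3 in Definition~\ref{Def-Degree} then gives $\hbox{deg}(\varphi_K^{\rm nat},\Omega,0)=\hbox{deg}(H(\cdot,0),\Omega,0)$; since $x^*\in K$, $H(x,0)=x-\Pi_K[x^*]=x-x^*$, and a further elementary homotopy $x\mapsto x-sx^*$ (valid on $\hbox{bd}(\Omega)$ because $R>\|x^*\|$) reduces this degree to $\hbox{deg}(\mathbb{I},\Omega,0)=1$ by Axiom~1.

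Stringing together the two displayed equalities yields $\hbox{deg}(F_K^{\rm nat},\Omega,0)=1\ne 0$, so Lemma~\ref{Lem-Degree Theorem} produces a point $\bar x\in\Omega$ with $F_K^{\rm nat}(\bar x)=0$, whence $\bar x\in\hbox{SOL}(K,F)$ by Theorem~\ref{Thm-auxiliary-natural map solution}. The step I expect to require the most care is the zero-freeness of $H$ on $\hbox{bd}(\Omega)\times[0,1]$: the $2$-monotonicity bound degenerates as $t\to 1$ and the $\xi$-monotonicity bound degenerates as $t\to 0$, so the two have to be stitched together on overlapping subintervals, and one must separately certify that the unique VI solution of $\varphi^t$ does not drift off $x^*$ as $t$ varies --- otherwise Lemma~\ref{Lem-property-xi monotone mapping} could not be invoked with a fixed reference point.
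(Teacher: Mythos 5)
Your argument is correct, and its skeleton matches the paper's: use Lemma~\ref{Lem-property-xi monotone mapping} to get a coercive lower bound on $\|\varphi_K^{\rm nat}\|$, invoke the factor-$1/7$ nearness criterion of Lemma~\ref{Lem-Degree Nearness} on a large ball to equate $\hbox{deg}(F_K^{\rm nat},\Omega,0)$ with $\hbox{deg}(\varphi_K^{\rm nat},\Omega,0)$, show the latter is nonzero by a homotopy, and finish with Lemma~\ref{Lem-Degree Theorem} and Theorem~\ref{Thm-auxiliary-natural map solution}. Where you genuinely diverge is the homotopy used to evaluate $\hbox{deg}(\varphi_K^{\rm nat},\Omega,0)$: the paper takes $\phi_t(x)=t\varphi(x)+(1-t)x$, whose unique VI solution $\bar{x}_t$ moves with $t$, and therefore must separately prove that the solution path $C_0=\{\bar{x}_t\}$ is bounded (via a reference point $x^{\rm ref}$ and a Cauchy--Schwarz estimate) before it can certify zero-freeness of $(\phi_t)_K^{\rm nat}$ on the boundary; it then identifies the $t=0$ endpoint $\mathbb{I}-\Pi_K[0]$ using Proposition~2.1.3(a) of \cite{facchinei2003finite}. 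Your shifted homotopy $\varphi^t(x)=t\varphi(x)+(1-t)(x-x^*)$ pins the unique solution at $x^*$ for every $t$ (indeed $\langle\varphi^t(x^*),x-x^*\rangle=t\langle\varphi(x^*),x-x^*\rangle\ge 0$, and $\varphi^t$ is strictly monotone), so Lemma~\ref{Lem-property-xi monotone mapping} applies with a fixed reference point and the uniform boundary bound follows by stitching the strong-monotonicity estimate (constant $1-t$) and the $\xi$-monotonicity estimate (constant $tc_\varphi$) on overlapping $t$-intervals, exactly as you describe; the endpoint $H(\cdot,0)=x-\Pi_K[x^*]=x-x^*$ is then reduced to the identity by the elementary homotopy $x-sx^*$, which is legitimate since $\|x^*\|<R$ keeps $sx^*$ off $\hbox{bd}(B_R(x^*))$ and $0\in\Omega$. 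The net effect is that you dispense with the paper's $C_0$-boundedness argument and with the appeal to Proposition~2.1.3(a), at no cost in generality; the centering of $\Omega$ at $x^*$ rather than the origin is cosmetic. Your closing caveats (degeneration of the two monotonicity bounds at the endpoints, and the need for the solution not to drift) are precisely the right points to check, and your construction resolves both.
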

 \begin{proof}
 Since $\varphi$ is $\xi$-monotone, the VI$(K,\varphi)$ has a unique solution $x^*\in K$ (\cite{facchinei2003finite}, Theorem~2.3.3).
     By the $\xi$-monotonicity of $\varphi$, with $\xi>1$,  and Lemma~\ref{Lem-property-xi monotone mapping}, we have $\lim_{\|x\|\to\infty} \|\varphi_K^{\rm nat}(x)\|=+\infty$. 
     Thus, $\|\varphi_K^{\rm nat}(x)\|\geq 7l$ for some $r>0$ large enough so that 
     $\|x\|\geq r>\|x^*\|$. %where $x^*$ is the unique solution of VI$(K,\varphi)$ 
     %Hence, $\hbox{dist}\left(0,\varphi_K^{\rm nat}\left(x\right)\right)\geq 7l$. 
     
     Let $B_r(0)\subset \mathbb{R}^m$ be an open ball centered at the origin with the radius $r>0$, i.e., 
     $B_r(0):=\{y\in \mathbb{R}^m\mid \|x\|<r\}$.
     The boundary of this ball is $\hbox{bd}\big(B_r(0)\big):=\{x\in \mathbb{R}^m\mid |\|x\|=r\}$. 
     Since $\|\varphi_K^{\rm nat}(x)\|\geq 7l$ for all $x$ with $\|x\|\ge r$, it follows that $\hbox{dist}\left(0,\varphi_K^{\rm nat}(\hbox{bd}(B_r(0))\right)\geq 7l$. Therefore, by the given condition in the theorem, we have 
     \begin{align*}
         \max_{x\in \hbox{cl}\big(B_r(0)\big) }\|\varphi_K^{\rm nat}(x)-F_K^{\rm nat}(x)\|<l \leq \frac{1}{7}\hbox{dist}\left(0,\varphi_K^{\rm nat}\left(\hbox{bd}\big(B_r(0)\big)\right)\right).
     \end{align*}
     By Lemma~\ref{Lem-Degree Nearness}, $\hbox{deg}(\varphi_K^{\rm nat},B_r(0))=\hbox{deg}(F_K^{\rm nat},B_r(0))$. 
     
    Consider the mapping $\phi_t(x)=t\varphi(x) +(1-t)x$ for any $t\in [0,1]$. We have for every $x,y\in \Rem$,
     \begin{align}\label{eq-ksi_monotone_Phi}
         \langle t\varphi(x)+(1-t)x-\left(t\varphi(y)+(1-t)y\right),x-y\rangle\geq tc_\varphi\|x-y\|^\xi+(1-t)\|x-y\|^2.
     \end{align}
     Thus, the mapping $\phi_t$ is $\xi$- monotone for every $t\in(0,1]$ and strongly monotone for $t=0$. By Theorem~2.3.3 in~\cite{facchinei2003finite}, the VI$(K,\phi_t)$ has a unique solution $\bar{x}_t$ for every $t\in[0,1]$,
     which satisfies: for each $t\in[0,1]$,
 \begin{equation}\label{Eq_Sol_phi_t}
         \langle\phi_t(\bar{x}_t),x-\bar{x}_t \rangle\geq 0\qquad\hbox{for all $x\in K$}.
     \end{equation}
     Let $C_0$ be the set of solutions for the VI$(K,\phi_t)$ for $t\in[0,1]$, i.e.,
     $C_0:=\{\bar{x}_t \mid t\in[0,1]\}.$
    The mapping $\phi_t$ is continuous in $t$, so the set $C_0\subset K$ is closed.
     Letting  $x=x^{\rm ref}$ for some $x^{\rm ref}\in K\setminus C_0$ and $y=\bar{x}_t$ in~\eqref{eq-ksi_monotone_Phi}, and combining it with~\eqref{Eq_Sol_phi_t}, we have for every $t\in[0,1]$,
     \begin{align}\label{eq-ksi_monotone_Phi-2}
         \langle t\varphi(x^{\rm ref})+(1-t)x^{\rm ref},x^{\rm ref}-\bar{x}_t\rangle \geq tc_\varphi\|x^{\rm ref}-\bar{x}_t\|^\xi+(1-t)\|x^{\rm ref}-\bar{x}_t\|^2.
     \end{align}
     By the Cauchy Schwarz inequality, we further have 
     \begin{align}\label{eq-ksi_monotone_Phi-3}
         \|t\varphi(x^{\rm ref})+(1-t)x^{\rm ref}\|\|x^{\rm ref}-\bar{x}_t\|\geq tc_\varphi\|x^{\rm ref}-\bar{x}_t\|^\xi+(1-t)\|x^{\rm ref}-\bar{x}_t\|^2.
     \end{align}
     Let us define 
     For all $t\in [0,1]$,  we have $\|x^{\rm ref}-\bar{x}_t\|\neq 0$, so by dividing both sides of~\eqref{eq-ksi_monotone_Phi-3} with $\|x^{\rm ref}-\bar{x}_t\|$ we obtain 
     \begin{align*}
         \|t\varphi(x^{\rm ref})+(1-t)x^{\rm ref}\|\geq tc_\varphi\|x^{\rm ref}-\bar{x}_t\|^{\xi-1}+(1-t)\|x^{\rm ref}-\bar{x}_t\|.
     \end{align*}
     Hence,  the set $C_0$ is bounded. For every $t\in[0,1]$, the mapping $\phi_t$ is $\xi$-monotone with the constant $tc_{\phi_t}$ and strongly monotone with constant $1-t$. Also, it is Lipschitz continuous with the constant $tL_{\varphi}+(1-t)$. Thus,  by Lemma~\ref{Lem-property-xi monotone mapping} we have for every $t\in[0,1]$ and all $x\in\Rem$,
     \begin{align*}
        \max \left\{\frac{tc_\varphi\|x-\bar{x}_t\|^{\xi-1}}{tL_{\varphi}+(1-t)},\frac{(1-t)\|x-\bar{x}_t\|^{2}}{tL_{\varphi}+(1-t)}\right\}\leq\|(\phi_t)_K^{\rm nat}(x)\|.
     \end{align*}
     Since $tL_{\varphi}+(1-t)\leq \max\{L_{\varphi},1\}$, we have for ll $t\in[0,1]$ and $x\in\Rem$,
     \begin{align}\label{eq-applying-lemma8-1}
       \frac{\max \{tc_\varphi\|x-\bar{x}_t\|^{\xi-1},(1-t)\|x-\bar{x}_t\|^{2}\}}{\max\{L_{\varphi},1\}}\leq\|(\phi_t)_K^{\rm nat}(x)\|.
     \end{align}
     Let $C:=C_0\cup \{x^{\rm ref}\}$ and note that 
     %\{\bar{x}_t\mid t\in[0,1]\}=C_0\cup \{x^{\rm ref}\}.$
     the set $C$ is bounded. Thus, there is $r'>0$ such that for all $x$ with $\|x\|>r'$ and all $t\in[0,1]$ with $\bar{x}_t\in C$, we have $\|x-\bar{x}_t\|>0$. Therefore, the left hand side of~\eqref{eq-applying-lemma8-1} is positive for all $x$ with $\|x\|>r'$,
     implying that $0\notin (\phi_t)_K^{\rm nat}\left({\rm bd}(B_{r'}(0))\right)$
     for all $t\in[0,1]$.
     Let $r^0=\max\{r',\|\Pi_K(0)\|\}$, and  assume that ${\rm deg}(\varphi^{\rm nat}_K,B_{r^0}(0))=0$. By the invariance property of the degree (Axiom~3 of Definition~\ref{Def-Degree}) for the homotopy $H(x,t):=(\phi_t)_K^{\rm nat}(x)$, we have for $t=0$,
     \[{\rm deg}(\mathbb{I}-\Pi_K[0],B_{r^0}(0))=0.\]
     Using Proposition~2.1.3(a) of \cite{facchinei2003finite}, we have \[{\rm deg}(\mathbb{I}-\Pi_K[0],B_{r^0}(0))={\rm deg}(\mathbb{I},B_{r^0}(0),\Pi_K[0]).\]
    Hence, ${\rm deg}(\mathbb{I},B_{r^0}(0),\Pi_K[0])=0$; however, this yields a contradiction with Axiom~1 of Definition~\ref{Def-Degree} since $\Pi_K[0]\in B_{r^0}(0)$. Thus, ${\rm deg}(\varphi_K^{\rm nat},B_{r^0}(0))\neq 0$. Considering $r'>r$ and Lemma~\ref{Lem-property-xi monotone mapping} we can conclude that ${\rm deg}(\varphi_K^{\rm nat},B_{r^0}(0))\neq 0$. In addition, using the fact that ${\rm deg}(\varphi_K^{\rm nat},B_{r^0}(0))={\rm deg}(F_K^{\rm nat},B_{r^0}(0))$ by Lemma~\ref{Lem-Degree Theorem}, there is  $\bar{x}\in \hbox{cl}(B_{r^0}(0))\subset \mathbb{R}^m$ such that $F_K^{\rm nat}(\bar{x})=0$. By Theorem~\ref{Thm-auxiliary-natural map solution}, the point $\bar x$ is a solution to VI$(K,F)$. 
     
 \end{proof}
\begin{rem}
    The condition on the natural maps of Theorem~\ref{Thm-Degree theorem main conclusion} will hold if for  some $l\in\mathbb{R}^+$,
    \[\max_{x\in \Rem} \|\varphi(x)-F(x)\|<l. \] 
\end{rem}
The preceding condition is helpful when we have no access to the mapping $F$ itself to check the conditions similar to condition~2.2.2 of \cite{facchinei2003finite}, but we are aware that the mapping is continuous and obtained by some bounded perturbation of a $\xi$-monotone mapping. It should also be noted that even when the form of $F$ is available, finding a reference point and $\xi$ satisfying the condition 2.2.2 in \cite{facchinei2003finite} can be a challenging task. Our Theorem~~\ref{Thm-Degree theorem main conclusion} may provide a useful alternative.
%An immediate consequence of this result emerges in the context of game theory, where a system admin is concerned about whether some pricing strategy may affect the existence of a pure strategy of a game with $\xi$-monotone mapping. In this case, checking condition~2.2.2 in \cite{facchinei2003finite} requires the knowledge of the game's mapping in the presence of the pricing. However, our aforementioned condition guarantees that any continuous bounded pricing function does not affect the existence of a quasi Nash equilibrium in this setup.}

Next, we provide sufficient conditions for the existence of a strong Minty solution to VI$(K,F)$ (see Definition~\ref{Def-Strong-Minty}). 

% \an{\bf Sina, don't we need some continuity assumption on $\varphi$ to claim the existence of solutions??? - check the conditions of Theorem~2.3.3 in \cite{facchinei2003finite} carefully.}
\begin{thm}\label{Thm-Minty sol}
Let the set $K\subseteq\mathbb{R}^m$ be nonempty, closed, and convex, and let the mapping $\varphi:K\to\mathbb{R}^m$ be \sa{continuous and} strongly monotone with the strong monotonicity constant $\mu_{\varphi}>0$. Let $\Tilde{x}$ be the unique solution to the VI$(K,\varphi)$. 
Assume that $\|\varphi(x)-F(x)\|\le d\|x-\Tilde{x}\|$ for some $d <\mu_{\varphi}$and for all $x\in K$.
Then, $\Tilde{x}$ is a strong Minty solution to VI$(K,F)$.
\end{thm}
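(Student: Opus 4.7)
The plan is to combine the strong monotonicity of $\varphi$ with the VI characterization at $\tilde{x}$, and then transfer the resulting inequality to $F$ using the perturbation bound $\|\varphi(x)-F(x)\|\le d\|x-\tilde{x}\|$.

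First I would use that $\tilde{x}\in \mathrm{SOL}(K,\varphi)$, which gives $\langle \varphi(\tilde{x}),x-\tilde{x}\rangle\ge 0$ for all $x\in K$, and couple this with strong monotonicity of $\varphi$, namely $\langle \varphi(x)-\varphi(\tilde{x}),x-\tilde{x}\rangle\ge \mu_\varphi\|x-\tilde{x}\|^2$. Adding the two inequalities yields
\[
\langle \varphi(x),x-\tilde{x}\rangle\ge \mu_\varphi\|x-\tilde{x}\|^2\qquad \text{for all } x\in K.
\]

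Next I would split $F$ as $F(x)=\varphi(x)-(\varphi(x)-F(x))$ and write
\[
\langle F(x),x-\tilde{x}\rangle=\langle \varphi(x),x-\tilde{x}\rangle-\langle \varphi(x)-F(x),x-\tilde{x}\rangle.
\]
Applying the Cauchy--Schwarz inequality together with the hypothesis $\|\varphi(x)-F(x)\|\le d\|x-\tilde{x}\|$, the cross term is bounded in absolute value by $d\|x-\tilde{x}\|^2$. Combining this with the lower bound from the previous step yields
\[
\langle F(x),x-\tilde{x}\rangle\ge (\mu_\varphi-d)\|x-\tilde{x}\|^2\qquad\text{for all } x\in K.
\]
Since $d<\mu_\varphi$ by assumption, $\eta:=\mu_\varphi-d>0$, which is precisely the strong Minty condition from Definition~\ref{Def-Strong-Minty}.

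I do not anticipate any significant obstacle: the argument is a direct composition of three elementary inequalities. The only subtle aspect worth flagging is that the proximity hypothesis is one-sided (measured against $\tilde{x}$ rather than uniformly in pairs), which is exactly what is needed because the conclusion is itself localized at $\tilde{x}$; no additional regularity beyond continuity of $\varphi$ (used implicitly through its role in the VI formulation) and the stated monotonicity is required.
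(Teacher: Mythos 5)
Your proposal is correct and follows essentially the same route as the paper's proof: the paper likewise derives $\langle \varphi(x),x-\tilde{x}\rangle\ge \mu_\varphi\|x-\tilde{x}\|^2$ for all $x\in K$ from the solution property and strong monotonicity, bounds $\langle \varphi(x)-F(x),x-\tilde{x}\rangle\le d\|x-\tilde{x}\|^2$ via Cauchy--Schwarz and the proximity hypothesis, and subtracts to obtain $(\mu_\varphi-d)\|x-\tilde{x}\|^2\le \langle F(x),x-\tilde{x}\rangle$. Your write-up merely makes explicit the intermediate step combining $\langle \varphi(\tilde{x}),x-\tilde{x}\rangle\ge 0$ with strong monotonicity, which the paper states in a single line.
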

\begin{proof}
 The  VI$(K,\varphi)$  has a unique solution (Theorem~2.3.3 in \cite{facchinei2003finite}). By the strong monotonicity of $\varphi$, for the solution $\Tilde{x}$, we have
\begin{equation}\label{eq-var}
\langle\varphi(x), x-\Tilde{x}\rangle \geq \mu_{\varphi}\|x-\Tilde{x}\|^2\qquad\hbox{for all }x\in K.
\end{equation}
By the assumption that $\|\varphi(x)-F(x)\|\le d\|x-\Tilde{x}\|$ for all $x\in K$, it follows that
$$
\langle\varphi(x)-F(x), x-\Tilde{x}\rangle \leq d\|x-\Tilde{x}\|^2\qquad\hbox{for all }x\in K.
$$
Hence, we have $$(\mu_{\varphi}-d)\|x-\Tilde{x}\|^2\leq \langle F(x), x-\Tilde{x}\rangle,$$
implying that $\tilde{x}$ is a strong Minty solution to VI$(K,F)$.

\end{proof}

%--------------------------------------------------------
\section{Algorithms}\label{Sec-Algorithm analysis}
%--------------------------------------------------------
%\sa{ The natural question may arise in the presence of knowledge about the existence of a solution to VI$(K,F)$ is under what extra conditions we can obtain a solution numerically. Although, under Lipscitz continuity of the mapping and monotone condition, 
%Here, we consider Korpelevich and Popov methods for solving a VI problem. These methods have been extensively studied for monotone VIs (\cite{facchinei2003finite},~\cite{beznosikov2023smooth} ) -\an{\bf Sina, we need more references for this - I have papers with Farzad Yousefian and Uday - take one or two from there.} In many applications, such as those arising in machine learning the mapping is not necessarily monotone and we focus on this case.

In this section, we consider Korpelevich~\cite{C4} (aka the extra-gradient) and Popov~\cite{popov1980modification} methods for solving a VI problem.
These methods have been extensively studied for monotone VIs (\cite{facchinei2003finite, yousefian2014optimal, farzad2017, farzad2018, beznosikov2023smooth}). In many applications, such as those arising in machine learning the mapping is not necessarily monotone and we focus on this case.
Under some conditions, we show that both Korpelevich and Popov methods converge sub-sequentially to a solution set
even for non-monotone VIs.

\begin{assum}\label{Assum-set-Lip}
    Let $K\subseteq\mathbb{R}^m$ be a nonempty closed convex set, and let the mapping  $F:K\rightarrow \mathbb{R}^m$  be Lipschitz continuous on $K$, i.e., 
    there exists a constant $L$ such that $\|F(x)-F(y)\|\leq L\|x-y\|$ for all $x,y\in K$.
\end{assum}
%\begin{assumption}\label{Assum-Lipschitz continuity}
%There exists a constant $L$ such that $\|F(x)-F(y)\|\leq L\|x-y\|$ for all $x,y\in K$.
%\end{assumption}

The Korpelevich method is given by: for all $k\ge0$,
\begin{align}\label{Alg-Korpelevich}
        &y^k=\Pi_K[x^k-\alpha F(x^k)],\nonumber \\
        &x^{k+1}=\Pi_K[x^k-\alpha F(y^k)],
    \end{align}
    where $\alpha>0$ is a stepsize, and $x^0,y^0\in K$ are arbitrary initial points.
The next theorem shows that, having a Minty solution to VI$(K,F)$,  the Korpelevich method generates a bounded sequence $\{x^k\}$ with accumulation points in the set SOL$(K,F)$, for a suitable selection of the stepize. 

% It also shows for $\frac{0.986}{L}\leq\alpha<\frac{1}{L}$, the analysis leads to a geometric rate of convergence.
\begin{thm}\label{Thm-extragradient main variant}
    Let Assumption~\ref{Assum-set-Lip} hold. Assume that the VI$(K,F)$ has a Minty solution.
    %, i.e., there is $\Tilde{x}\in {\rm MSOL}(K,F)$. 
    Then, the sequence $\{x^k\}$ generated by the Korpelevich method~\eqref{Alg-Korpelevich}, with the stepsize $0<\alpha<\frac{1}{L}$, is bounded and all of its accumulation points lie in the solution set SOL$(K,F)$. 
    % Moreover, for some range of stepsize $\alpha$, the rate of convergence to the solution set is geometric.
    %, i.e., $\hat{x}\in {\rm SOL}(K,F)$.
\end{thm}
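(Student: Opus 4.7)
The plan is to establish a Fej\'er-type quasi-monotonicity of $\{\|x^k-x^*\|^2\}$ with respect to a Minty solution $x^*\in\mathrm{MSOL}(K,F)$, and then use the standard consequences (boundedness, summability, and a characterization of accumulation points via the natural map). Let me fix a Minty solution $x^*\in K$, so that $\langle F(x), x-x^*\rangle \geq 0$ for every $x\in K$; in particular, $\langle F(y^k), y^k-x^*\rangle \geq 0$ for all $k$.

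First I would exploit the projection inequality $\|\Pi_K[z]-u\|^2 \leq \|z-u\|^2 - \|\Pi_K[z]-z\|^2$ for $u\in K$, applied to $x^{k+1}=\Pi_K[x^k-\alpha F(y^k)]$ with $u=x^*$, to get
\[
\|x^{k+1}-x^*\|^2 \leq \|x^k-x^*\|^2 - 2\alpha\langle F(y^k), x^{k+1}-x^*\rangle - \|x^{k+1}-x^k\|^2.
\]
Splitting $x^{k+1}-x^* = (x^{k+1}-y^k)+(y^k-x^*)$, the second term contributes a non-positive quantity by the Minty property. The key step is to control the cross term $-2\alpha\langle F(y^k), x^{k+1}-y^k\rangle$. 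Using the variational characterization of $y^k=\Pi_K[x^k-\alpha F(x^k)]$ with test point $x^{k+1}\in K$, I obtain $\langle x^k-y^k, x^{k+1}-y^k\rangle \leq \alpha\langle F(x^k), x^{k+1}-y^k\rangle$. Adding and subtracting $\alpha F(x^k)$ inside the cross term and using this, together with Lipschitz continuity of $F$ and Cauchy--Schwarz, then applying the AM-GM inequality $2\alpha L\|x^k-y^k\|\|x^{k+1}-y^k\| \leq \alpha^2 L^2\|x^k-y^k\|^2 + \|x^{k+1}-y^k\|^2$, together with the polarization identity $2\langle x^k-y^k,x^{k+1}-y^k\rangle = \|x^k-y^k\|^2+\|x^{k+1}-y^k\|^2-\|x^{k+1}-x^k\|^2$, should give the consolidated bound
\[
\|x^{k+1}-x^*\|^2 \leq \|x^k-x^*\|^2 - (1-\alpha^2 L^2)\|x^k-y^k\|^2.
\]

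With the stepsize assumption $0<\alpha<1/L$, the factor $1-\alpha^2L^2$ is strictly positive, so the sequence $\{\|x^k-x^*\|^2\}$ is non-increasing, which yields boundedness of $\{x^k\}$. Summing the inequality over $k$ gives $\sum_{k=0}^{\infty}\|x^k-y^k\|^2 < \infty$, and in particular $\|x^k-y^k\|\to 0$. Hence $\{y^k\}$ is bounded as well, and any accumulation point of $\{x^k\}$ is also an accumulation point of $\{y^k\}$.

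Finally, to identify accumulation points as solutions, I would take a subsequence $x^{k_j}\to \bar x$; then $y^{k_j}\to \bar x$ too. Passing to the limit in $y^{k_j}=\Pi_K[x^{k_j}-\alpha F(x^{k_j})]$ using continuity of $F$ and of $\Pi_K$ gives $\bar x = \Pi_K[\bar x-\alpha F(\bar x)]$, i.e., $F_K^{\rm nat}(\bar x)=0$, so by Theorem~\ref{Thm-auxiliary-natural map solution}, $\bar x\in \mathrm{SOL}(K,F)$. The main technical obstacle is the bookkeeping in step one to arrive at a clean Fej\'er-type inequality with the sharp stepsize condition $\alpha<1/L$; once that bound is in place the remaining arguments are standard, and crucially the non-monotonicity of $F$ is handled entirely through the single use of the Minty inequality at the point $y^k$ versus $x^*$.
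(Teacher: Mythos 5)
Your proposal is correct and follows essentially the same route as the paper's proof: the projection inequality at $x^{k+1}$ combined with the Minty inequality at $y^k$, the variational characterization of $y^k$ tested at $x^{k+1}$, Lipschitz continuity with Cauchy--Schwarz and AM--GM to reach $\|x^{k+1}-\tilde x\|^2\le\|x^k-\tilde x\|^2-(1-\alpha^2L^2)\|x^k-y^k\|^2$, and then identification of accumulation points via $F_K^{\rm nat}(\bar x)=0$ and Theorem~\ref{Thm-auxiliary-natural map solution}. No gaps to report.
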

\begin{proof}
     Using the properties of the projection, from the definition of the iterate $x^{k+1}$ we have that for all $k\ge0$ and any $x\in K$,
\begin{align}\label{eq-alg 1}
        \|x^{k+1}-x\|^2\leq&\|x^k-\alpha F(y^k)-x\|^2 -\|x^k-\alpha F(y^k)-x^{k+1}\|^2\cr
        =&\|x^{k}-x\|^2-\|x^{k}-x^{k+1}\|^2+2\alpha\langle F(y^k),x-x^{k+1} \rangle.
    \end{align}
    Letting $x=\Tilde{x}$, where
    $\Tilde{x}$ is a Minty solution, since $y^k\in K$, we have $\langle F(y^k),\Tilde{x}-y^{k} \rangle \leq 0$ for all $k\ge0$. Therefore,\begin{align}\label{eq-alg 2}
        \langle F(y^k),\Tilde{x}-x^{k+1} \rangle&=\langle F(y^k),\Tilde{x}-y^{k} \rangle 
        +\langle F(y^k),y^{k}-x^{k+1} \rangle \leq \langle F(y^k),y^{k}-x^{k+1} \rangle.
    \end{align}
    Using \eqref{eq-alg 2} in \eqref{eq-alg 1}, where $x=\tilde x$, we can write
     \begin{align}\label{eq-alg 3}
        \|x^{k+1}-\Tilde{x}\|^2\leq&\|x^{k}-\Tilde{x}\|^2-\|x^{k}-x^{k+1}\|^2 +2\alpha\langle F(y^k),y^{k}-x^{k+1} \rangle \nonumber \\
        =&\|x^{k}-\Tilde{x}\|^2-\|x^{k}-y^{k}\|^2-\|y^{k}-x^{k+1}\|^2 \nonumber \\
        &-2\langle x^{k}-y^{k},y^{k}-x^{k+1} \rangle+2\alpha\langle F(y^k),y^{k}-x^{k+1} \rangle \nonumber \\
        =&\|x^{k}-\Tilde{x}\|^2-\|x^{k}-y^{k}\|^2-\|y^{k}-x^{k+1}\|^2\nonumber \\
        &+2\langle x^{k}-\alpha F(y^k)-y^{k},x^{k+1}-y^{k} \rangle.
    \end{align}
We can estimate the last term in~\eqref{eq-alg 3} in the following form using the Cauchy inequality
\begin{align}\label{eq-alg 4}
        \langle x^{k}-\alpha F(y^k)-y^{k},x^{k+1}-y^{k} \rangle
        = &\langle x^{k}-\alpha F(x^k)-y^{k},x^{k+1}-y^{k} \rangle +\alpha\langle  F(x^k)-F(y^k),x^{k+1}-y^{k} \rangle \nonumber \\
        \leq &
        \alpha \langle F(x^k)-F(y^k),x^{k+1}-y^{k} \rangle \nonumber \\
        \leq &\alpha\|F(x^k)-F(y^k)\|\|x^{k+1}-y^{k}\|,
    \end{align}
    where the first inequality is obtained using the fact that $\langle x^{k}-\alpha F(x^k)-y^{k},x^{k+1}-y^{k} \rangle \le 0$ which follows from the projection inequality  $\langle z-\Pi_K[z],x-\Pi_K[z]\rangle\le 0$ for all $z\in\mathbb{R}^m$ and $x\in K$, the definition of $y^k$, and $x^{k+1}\in K$. 
Combining~\eqref{eq-alg 3} and \eqref{eq-alg 4}, and using the Lipschitz continuity of the mapping $F$,  we obtain 
\begin{align}\label{eq-alg 6}
        \|x^{k+1}-\Tilde{x}\|^2\leq&\|x^{k}-\Tilde{x}\|^2-\|x^{k}-y^{k}\|^2-\|y^{k}-x^{k+1}\|^2 +2\alpha L\|x^k-y^k\|\|x^{k+1}-y^{k}\| \nonumber \\
        \leq&\|x^{k}-\Tilde{x}\|^2-\|x^{k}-y^{k}\|^2-\|y^{k}-x^{k+1}\|^2 +\alpha^2 L^2\|x^k-y^k\|^2+\|x^{k+1}-y^{k}\|^2,
    \end{align}
    where in the last inequality in~\eqref{eq-alg 6} we use 
\[2\alpha  L\|x^k-y^k\|\|x^{k+1} - y^k\|
\le 
        \alpha^2 L^2\|x^k-y^k\|^2+\|x^{k+1} - y^k\|^2.\]
From~\eqref{eq-alg 6} it follows that for all $k\ge0$, 
\begin{align}
\label{eq-alg-main-eq-for convergence rate}
        \|x^{k+1}-\Tilde{x}\|^2&\leq\|x^{k}-\Tilde{x}\|^2-(1-\alpha^2 L^2)\|x^k-y^k\|^2.
    \end{align}
Therefore, for $\alpha<\frac{1}{L}$, we see that $\|x^k-y^k\|\to0$ and $\|x^{k}-\Tilde{x}\|^2$ converges, and, hence, $\{x^{k}\}_{k=0}^\infty$ is bounded. As a result, for every convergent subsequence $\{x^{k_i}\}_{i=1}^\infty$ with $\lim_{i\rightarrow\infty}x^{k_i}=\hat{x}$, the limit point $\hat x$ is in the set $K$ since $K$ is closed.
From the definition of $y^k$, we have that 
\[\|y^k-x^k\|=\|\Pi_K[x^k-\alpha F(x^k)]-x^k\|.\]
Since $\|x^k-y^k\|\to0$, it follows that 
$\|\Pi_K[x^k-\alpha F(x^k)]-x^k\|\to 0$.
Thus, for any convergent subsequence $\{x^{k_i}\}_{i=1}^\infty$ with
$\lim_{i\rightarrow\infty}x^{k_i}=\hat{x}$, it follows that $\|\Pi_K[x^k-\alpha F(x^k)]-x^k\|=0,$
implying that 
$\hat{x}=\Pi_K[\hat{x}-\alpha F(\hat{x})]$. 
Hence, $F_K^{\rm nat}(\hat x)=0$ and by Theorem~\ref{Thm-auxiliary-natural map solution}, we have that 
$\hat{x}\in {\rm SOL}(K,F)$. 

\end{proof}
We note that the assumption of Theorem~\ref{Thm-extragradient main variant} that VI$(K,F)$ has a Minty solution together with the assumption that the mapping $F$ is Lipschitz continuous implies that VI$(K,F)$ has a solution
by Lemma~\ref{Lem-Minty Lemma}(a).

Next, we discuss the Popov method, which is given by: for all $k\ge0$,
\begin{align}\label{Alg-Popov}
        &x^{k+1}=\Pi_K[x^k-\alpha F(y^k)],\nonumber \\
        &y^{k+1}=\Pi_K[x^{k+1}-\alpha F(y^k)],
    \end{align}
    where $\alpha>0$ is a stepsize, and $x^0,y^0\in K$ are arbitrary initial points.
\begin{thm}\label{Thm-Popov main variant}
    Let Assumption~\ref{Assum-set-Lip} hold. Assume that the VI$(K,F)$ has a Minty solution. %i.e., there is $\Tilde{x}\in {\rm MSOL}(K,F)$. 
    Then, the sequence $\{x^k\}$ generated by the Popov method~\eqref{Alg-Popov}, with the stepsize $0<\alpha<\frac{1}{3L}$, is bounded and all of its accumulation points lie in the solution set SOL$(K,F)$.
    %converges to the solution set. 
    % Moreover, for some range of stepsize $\alpha$, the rate of convergence to the solution set is geometric.
    %, i.e., $\hat{x}\in {\rm SOL}(K,F)$.
\end{thm}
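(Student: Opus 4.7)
The plan is to follow the same blueprint as the proof of Theorem~\ref{Thm-extragradient main variant} for the Korpelevich method, with two adjustments dictated by the structure of the Popov iteration. First, since $x^{k+1}=\Pi_K[x^k-\alpha F(y^k)]$ uses the same $F(y^k)$ that defines $y^{k+1}$, while $y^k$ itself was produced in the previous step as $y^k=\Pi_K[x^k-\alpha F(y^{k-1})]$, the cross-error term that appears after expanding $\|x^{k+1}-\tilde x\|^2$ will involve $F(y^{k-1})-F(y^k)$ rather than the $F(x^k)-F(y^k)$ of Korpelevich. Consequently, the residual $\|y^{k-1}-y^k\|^2$ cannot be absorbed directly, and the quantity $\|x^k-\tilde x\|^2$ alone is no longer monotone; the analysis has to proceed through an augmented Lyapunov function.

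Concretely, for a Minty solution $\tilde x$ I would repeat the calculation in \eqref{eq-alg 1}--\eqref{eq-alg 3} of the Korpelevich proof to obtain
\[\|x^{k+1}-\tilde x\|^2 \le \|x^k-\tilde x\|^2 -\|x^k-y^k\|^2-\|y^k-x^{k+1}\|^2 +2\langle x^k-\alpha F(y^k)-y^k,\, x^{k+1}-y^k\rangle.\]
Splitting $x^k-\alpha F(y^k)-y^k=[x^k-\alpha F(y^{k-1})-y^k]+\alpha[F(y^{k-1})-F(y^k)]$ and invoking the projection inequality for $y^k=\Pi_K[x^k-\alpha F(y^{k-1})]$ with the test point $x^{k+1}\in K$ kills the first bracket, while Lipschitz continuity together with $2ab\le a^2+b^2$ bounds the remaining part by $\alpha L\|y^{k-1}-y^k\|^2+\alpha L\|y^k-x^{k+1}\|^2$. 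Using $\|y^{k-1}-y^k\|^2\le 2\|y^{k-1}-x^k\|^2+2\|x^k-y^k\|^2$ and rearranging gives
\[\|x^{k+1}-\tilde x\|^2 \le \|x^k-\tilde x\|^2-(1-2\alpha L)\|x^k-y^k\|^2-(1-\alpha L)\|y^k-x^{k+1}\|^2 + 2\alpha L\,\|y^{k-1}-x^k\|^2.\]
Setting $V_k := \|x^k-\tilde x\|^2+2\alpha L\|y^{k-1}-x^k\|^2$ and adding $2\alpha L\|y^k-x^{k+1}\|^2$ to both sides converts this into
\[V_{k+1}\le V_k-(1-2\alpha L)\|x^k-y^k\|^2-(1-3\alpha L)\|y^k-x^{k+1}\|^2,\]
and both coefficients are strictly positive under the hypothesis $\alpha<1/(3L)$.

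From the resulting telescoping descent, boundedness of $\{x^k\}$ and summability of $\{\|x^k-y^k\|^2\}$ and $\{\|y^k-x^{k+1}\|^2\}$ follow; in particular, $\|x^k-y^k\|\to 0$ and $\|y^{k-1}-x^k\|\to 0$. For any subsequence $x^{k_i}\to\hat x$ we then obtain $y^{k_i}\to\hat x$ and $y^{k_i-1}\to\hat x$, and passing to the limit in $y^{k_i}=\Pi_K[x^{k_i}-\alpha F(y^{k_i-1})]$ yields $\hat x=\Pi_K[\hat x-\alpha F(\hat x)]$, i.e.\ $F_K^{\rm nat}(\hat x)=0$, so Theorem~\ref{Thm-auxiliary-natural map solution} places $\hat x$ in ${\rm SOL}(K,F)$. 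The main obstacle, compared with Korpelevich, is choosing the Lyapunov function that compensates for the one-step delay in the gradient evaluation: the weight on the auxiliary $\|y^{k-1}-x^k\|^2$ term must be large enough to absorb the residual produced by $F(y^{k-1})-F(y^k)$, yet small enough not to destroy the negative coefficient in front of $\|y^k-x^{k+1}\|^2$, and this balancing is exactly what dictates the tighter stepsize restriction $\alpha<1/(3L)$.
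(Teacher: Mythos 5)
Your proof is correct and follows essentially the same route as the paper: the same expansion of $\|x^{k+1}-\tilde x\|^2$ using the projection properties, the same use of the Minty solution to drop $\langle F(y^k),\tilde x-y^k\rangle$, and the same splitting via $F(y^{k-1})-F(y^k)$ together with the projection inequality for $y^k=\Pi_K[x^k-\alpha F(y^{k-1})]$. The only difference is bookkeeping: you absorb the delayed term through the explicit Lyapunov function $V_k=\|x^k-\tilde x\|^2+2\alpha L\|y^{k-1}-x^k\|^2$, whereas the paper sums the per-iteration inequality and telescopes the $\|x^{k+1}-y^k\|^2$ terms, and both arrive at the identical stepsize condition $\alpha<\frac{1}{3L}$ and the same limiting argument $\hat x=\Pi_K[\hat x-\alpha F(\hat x)]$.
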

\begin{proof}
    Let $\Tilde{x}\in K$ be a Minty solution to VI$(K,F)$.
     Using projection property  $\langle z-\Pi_K[z],x-\Pi_K[z]\rangle\le 0$ for all $z\in\mathbb{R}^m$ and $x\in K$, and the definition of the iterate $x^{k+1}$, we have for $\|x^{k+1}-\tilde{x}\|^2$
    for all $k\ge0$,
\begin{align}\label{eq-alg 1-Popov}
        \|x^{k+1}-\tilde{x}\|^2\leq&\|x^k-\alpha F(y^k)-\tilde{x}\|^2-\|x^k-\alpha F(y^k)-x^{k+1}\|^2\cr
        =&\|x^{k}-x\|^2-\|x^{k}-x^{k+1}\|^2 +2\alpha\langle F(y^k),\tilde{x}-x^{k+1} \rangle.
    \end{align}
    %Where$\Tilde{x}$ is a Minty solution, 
    Since $y^k\in K$ and $\Tilde{x}$ is a Minty solution, we have $\langle F(y^k),\Tilde{x}-y^{k} \rangle \leq 0$ for all $k\ge0$. Therefore,\begin{align}\label{eq-alg 2-Popov}
        \langle F(y^k),\Tilde{x}-x^{k+1} \rangle&=\langle F(y^k),\Tilde{x}-y^{k} \rangle 
        +\langle F(y^k),y^{k}-x^{k+1} \rangle \leq \langle F(y^k),y^{k}-x^{k+1} \rangle.
    \end{align}
    Using \eqref{eq-alg 2-Popov} in \eqref{eq-alg 1-Popov} we can write
     \begin{align*}
        \|x^{k+1}-\Tilde{x}\|^2\leq&\|x^{k}-\Tilde{x}\|^2-\|x^{k}-x^{k+1}\|^2 +2\alpha\langle F(y^k),y^{k}-x^{k+1} \rangle \nonumber \\
        =&\|x^{k}-\Tilde{x}\|^2-\|x^{k}-y^{k}\|^2-\|y^{k}-x^{k+1}\|^2 \nonumber \\
        &+2\alpha\langle F(y^{k-1})-F(y^{k}),x^{k+1}-y^{k} \rangle \cr &+2\langle x^{k}-\alpha F(y^{k-1})-y^{k},x^{k+1}-y^{k}\rangle. 
        % =&\|x^{k}-\Tilde{x}\|^2-\|x^{k}-y^{k}\|^2-\|y^{k}-x^{k+1}\|^2\nonumber \\
        % &+2\langle x^{k}-\alpha F(y^k)-y^{k},x^{k+1}-y^{k} \rangle.
    \end{align*}
By the projection property $\langle x^{k}-\alpha F(y^{k-1})-y^{k},x^{k+1}-y^{k} \rangle \le 0$, the last term in the preceding relation is non-positive. 
Using the Cauchy-Schwarz inequality and the Lipschitz continuity of $F$,  we obtain for all $k\ge0$,
\begin{align*}
        \|x^{k+1}-\Tilde{x}\|^2\leq&\|x^{k}-\Tilde{x}\|^2-\|x^{k}-y^{k}\|^2-\|y^{k}-x^{k+1}\|^2 +2\alpha L(\|y^{k-1}-x^k\|+\|x^k-y^k\|)\|x^{k+1}-y^{k}\| \nonumber \\
        \leq&\|x^{k}-\Tilde{x}\|^2 -(1-2\alpha L)\|x^{k+1}-y^k\|^2-(1-\alpha L)\|y^{k}-x^{k}\|^2+\alpha L\|x^k-y^{k-1}\|^2.
    \end{align*}
From the preceding inequality, by summing the relations over $k=N,\ldots,M$ 
for some $M>N\ge0$,
it follows that 
\begin{align}
\label{eq-alg-main-eq-for convergence rate-popov}
        \|x^{M+1}-\Tilde{x}\|^2&\leq\|x^{N}-\Tilde{x}\|^2+\alpha L\|x^N-y^{N-1}\|^2\nonumber \\
        & -(1-\alpha L)\sum_{k=N}^{M}\|x^k-y^k\|^2\nonumber \\
        & -(1-3\alpha L)\sum_{k=N}^{M-1}\|x^{k+1}-y^k\|^2\nonumber \\
        & -(1-2\alpha L)\|x^{M+1}-y^M\|^2.
    \end{align}
    Since  $\alpha <\frac{1}{3L}$, 
    the sequence $\{x^k\}$ is bounded. Thus, for any convergent subsequence $x^{k_t}\to x^*$, from \eqref{eq-alg-main-eq-for convergence rate-popov} we have that $\|x^{k+1}-y^k\|$ and $\|x^k-y^k\|$ converge to zero. Therefore, as $k_t\to \infty$ we have $\Pi_K[x^*-\alpha F(x^*)]=x^*$ implying that $F_K^{\rm nat}(x^*)=0$. By Theorem~\ref{Thm-auxiliary-natural map solution},  $x^*\in {\rm SOL}(K,F)$. 
    %This completes the proof.
    
    \end{proof}

 \an{In Tables~\ref{Table-EG} and~\ref{Table-Popov}, we compare the assumptions in some related papers and those of this paper that were used in establishing the convergence properties of Korpelevich and Popov methods, respectively. Some of the related papers treat stochastic VIs, so we compare with their deterministic counterparts. The signs 
 $+$ and $-$ are used to indicate that an assumption is used or not used, respectively. The use of ``solutions" Tables~\ref{Table-EG} and~\ref{Table-Popov} indicates that the existence of such a solution has been assumed.} 
 
 % \sa{To be fair in comparing the results of the current paper with those in the literature, we first define $L_0,L_1$-smoothness.
 % \begin{defn}[$L_0,L_1$-smoothness \cite{vankov2024optimizing}]
 % A twice continuously differentiable function $f:\mathbb{R}^d\to \mathbb{R}$ is called $L_0,L_1$-smooth for (some $L_0,L_1\geq 0$) if it holds that \[\|\nabla^2 f(x)\|\leq L_0+L_1\|\nabla f(x)\|, \forall x\in \mathbb{R}^d.\]
 % \end{defn}}
 
%\an{\bf I have re-organized the rows in the tables.}
%\an{\bf If we talk about $L_o,L_1$- smoothness, we have to define it.}
\begin{table}[h!]
\centering
\caption{Assumptions for (deterministic) Korpelevich method}
\begin{tabular}{lcccc}
\toprule
\textbf{} & \textbf{\cite{hsieh2020explore}} & \textbf{\cite{vankov2024generalized}} & \textbf{\cite{choudhury2023single}} & \textbf{Our} \\
\midrule
Constrained VI & - & + & - & + \\
%Weak Minty Solution & - & - & + & + \\
Minty Solution & + & + & - & + \\
Strong Minty Solution & - & + & - & - \\
Lipschitz Continuity of $F$ & + & - & + & + \\
% $L_0,L_1$-Smoothness of $F$ & - & + & - & - \\
Fixed Step Size & - & - & + & + \\
%Last Iterate Convergence  & + & + & - & + \\
\bottomrule
\label{Table-EG} 
\end{tabular}
\end{table}
\begin{table}[h!] 
\centering
\caption{Assumptions for (deterministic) Popov method}
\begin{tabular}{lccc} 
\toprule
\textbf{} & \textbf{\cite{vankov2024generalized}} & \textbf{\cite{vankov2023last}} & \textbf{Our} \\
\midrule
Constrained VI & + & + & +  \\
%Existence of Weak Minty Solution & + & + & +  \\
Minty Solution & + & + & +\\
Strong Minty Solution & + & + & - \\
Lipschitz Continuity of $F$ & - & + & +  \\
% $L_0,L_1$-Smoothness of $F$ & + & - & - \\
Fixed Step Size & - & - & +  \\
\bottomrule
\label{Table-Popov} 
\end{tabular}
\end{table}

%%%%%%%%%%%%%%%%%%%%%%%%%%%%%%%%%%%%
\section{Conclusions}\label{Sec-conclusion}
%%%%%%%%%%%%%%%%%%%%%%%%%%%%%%%%%%%%%
In this paper, we studied unconstrained non-monotone VIs through the inverse mapping theorem and have obtained some conditions for the existence of solutions to such VIs. 
%We showed that mappings with orthogonal structures or those mappings satisfying our weak coupling criteria meet this set of conditions, guaranteeing a solution to the VI exists. 
We then extended these results for the case of constrained non-monotone VIs where the domain is nonempty closed convex but not necessarily compact. We have also developed new solution existence results using the degree theory.
Moreover, we derived some conditions that guarantee the existence of a Minty solution given that at least one solution exists for a related VI. Finally, we showed that the Korpelevich and Popov methods produce iterates with accumulation points in the solution set of the VI problem with a non-monotone mapping, assuming that a Minty solution exists.

\bibliographystyle{plain}
\bibliography{arr}
\end{document}